\definecolor{rltblue}{rgb}{0,0,0.4}
\definecolor{drkred}{rgb}{0.6,0,0}
\definecolor{drkgreen}{rgb}{0,0.4,0}
\newcommand{\bfSigma}{\mathbf{\Sigma}}
\newcommand{\bfPi}{\mathbf{\Pi}}
\newcommand{\A}{\mathcal{A}}
\newcommand{\B}{\mathcal{B}}
\newcommand{\N}{\mathcal{N}}
\newcommand{\SR}{\text{SR}}
\newcommand{\Z}{\mathbb{Z}}
\renewcommand{\L}{\mathcal{L}}
\newcommand{\mc}[1]{\mathcal{#1}}
\newcommand{\Pinf}[1]{\Pi_{#1}}
\newcommand{\Sinf}[1]{\Sigma_{#1}}
\newcommand{\dSinf}[1]{d\text{-}\Sigma_{#1}}
\renewcommand{\phi}{\varphi}
\DeclareMathOperator{\fin}{fin}
\newcommand{\Efin}{\mathrel{E_{\fin}} }
\newcommand{\EZ}{\mathrel{E_{\mathbb{Z}}} }
\def\hbar{{\bar{h}}}
\def\A{\mathcal A}
\def\B{\mathcal{B}}
\def\L{\mathcal L}
\def\N{{\mathcal N}}
\newcommand{\Res}[2]{{#1}^{\leq #2}}
\newtheorem{theorem}{Theorem}[section]
\newtheorem{lemma}[theorem]{Lemma}
\newtheorem{proposition}[theorem]{Proposition}
\theoremstyle{definition}
\newtheorem{definition}[theorem]{Definition}
\theoremstyle{remark}
\newtheorem{example}[theorem]{Example}
\newtheorem*{notation}{Notation}
\newtheorem{claim}{Claim}[theorem]
\newtheorem{question}{Question}
\def\and{\mathrel{\&}}
\begin{document}
	
	\title{Scott Analysis, Linear Orders and almost periodic Functions}
	\author{David Gonzalez \and Matthew Harrison-Trainor \and Meng-Che ``Turbo'' Ho}
	\thanks{The second author acknowledges support from the National Science Foundation under Grant No.\ \mbox{DMS-2153823}. The third author acknowledges support from the National Science Foundation under Grant No.\ \mbox{DMS-2054558}.}
	
	\begin{abstract}
		For any limit ordinal $\lambda$, we construct a linear order $L_\lambda$ whose Scott complexity is $\Sigma_{\lambda+1}$.
		This completes the classification of the possible Scott sentence complexities of linear orderings. Previously, there was only one known construction of any structure (of any signature) with Scott complexity $\Sigma_{\lambda+1}$, and our construction gives new examples, e.g., rigid structures, of this complexity.
		
		Moreover, we can construct the linear orders $L_\lambda$ so that not only does $L_\lambda$ have Scott complexity $\Sigma_{\lambda+1}$, but there are continuum-many structures $M \equiv_\lambda L_\lambda$ and all such structures also have Scott complexity $\Sigma_{\lambda+1}$. In contrast, we demonstrate that there is no structure (of any signature) with Scott complexity $\Pi_{\lambda+1}$ that is only $\lambda$-equivalent to structures with Scott complexity $\Pi_{\lambda+1}$.
		
		Our construction is based on functions $f \colon \mathbb{Z}\to \mathbb{N}\cup \{\infty\}$ which are almost periodic but not periodic, such as those arising from shifts of the $p$-adic valuations.
	\end{abstract}
	
	\maketitle

	\section{Introduction}
	
	Scott \cite{Sc65} proved that for every countable structure $\mc{A}$, there is a sentence $\varphi$ of the infinitary logic $\mc{L}_{\omega_1 \omega}$ that describes $\mc{A}$ up to isomorphism among countable structures in the sense that for any countable structure $\mc{B}$, $\mc{B} \cong \mc{A}$ if and only if $\mc{B} \models \varphi$. Such a sentence is called a Scott sentence for $\mc{A}$. To each structure $\mc{A}$ we can assign an ordinal-valued Scott rank which measures the least complexity of a Scott sentence for $\mc{A}$, essentially, the complexity of describing $\mc{A}$ up to isomorphism. There are a number of different non-equivalent definitions of Scott rank which are technically different but coarsely the same. The two with the tightest connection to Scott sentences were introduced by Montalb\'an in \cite{Mo15}, but the best reference is in Montalb\'an's book \cite{Mo21}. The parameterless Scott rank, $uSR(\mc{A})$, is the least $\alpha$ such that $\mc{A}$ has a $\Pi_{\alpha+1}$ Scott sentence, and the (parametrized) Scott rank, $pSR(\mc{A})$, is the least $\alpha$ such that $\mc{A}$ has a $\Sigma_{\alpha+2}$ Scott sentence. We will use $SR(\mc{A})$ in situations where it does not matter which version of Scott rank we choose, e.g., we will write $SR(\mc{A}) < \lambda$ with $\lambda$ a limit ordinal. There are also internal characterizations in terms of the definability of orbits and several other equivalencies \cite{Mo15}.
	
	Alvir, Greenberg, Harrison-Trainor, and Turetsky \cite{AGHT} defined the Scott complexity of a structure $\mc{A}$ as the least Wadge degree of the set of isomorphic copies of $\mc{A}$. The only possibilities are $\bfSigma^0_\alpha$, $\mathbf{d-}\bfSigma^0_\alpha$, and $\bfPi^0_\alpha$, and so by the Lopez-Escobar theorem \cite{Lo65}, the Scott complexity of $\mc{A}$ can be equivalently defined as follows:
	
	\begin{definition}
		The \emph{Scott complexity} of a structure $\mc{A}$, denoted as $SSC(\mc{A})$, is the least complexity, from among $\Sigma_\alpha$, d-$\Sigma_\alpha$, and $\Pi_\alpha$, of a Scott sentence for $\mc{A}$.
	\end{definition}
	\noindent This definition generalizes and is finer than both the parametrized and parameterless Scott ranks.
	
	Certain Scott complexities are impossible \cite{AGHT}. For example, any structure with a $\Sigma_\lambda$ Scott sentence ($\lambda$ a limit ordinal) must have a sentence of lower complexity, namely, one of the disjuncts. $\Sigma_2$ is also not a possible Scott complexity. After eliminating such examples, older work of Miller \cite{Mi83} gave examples of all of the remaining possible Scott complexities except for $\Sigma_{\lambda + 1}$ with $\lambda$ a limit ordinal. In \cite{AGHT}, such examples were given using a new technique developed by Turetsky.\footnote{This clever technique was also used for several other results in \cite{Turetsky20}. The idea is that the structure represents the vertices of the infinite-dimensional hypercube (with finite support), and the automorphisms are generated by the reflections. Fixing any single element/vertex makes the structure rigid.}
	\begin{theorem}[Alvir, Greenberg, Harrison-Trainor, and Turetsky \cite{AGHT}]
		The possible Scott complexities of countable structures are:
		\begin{enumerate}
			\item $\Pi_\alpha$ for $\alpha \geq 1$;
			\item $\Sigma_\alpha$ for $\alpha \geq 3$ a successor ordinal;
			\item d-$\Sigma_\alpha$ for $\alpha \geq 1$ a successor ordinal.
		\end{enumerate}
		There is a countable structure with each of these Scott complexities.
	\end{theorem}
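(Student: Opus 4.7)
The plan is to prove the theorem in two halves: first that the Scott complexities not on the list cannot occur, and second that every complexity on the list is attained by some countable structure. For the impossibility half, I would rule out $\Sigma_1$, $\Sigma_2$, and $\Sigma_\lambda$ or d-$\Sigma_\lambda$ for $\lambda$ a limit. The limit-$\lambda$ cases use the observation that a Scott sentence of the form $\bigvee_{i<\omega}\varphi_i$ with each $\varphi_i$ strictly below $\Sigma_\lambda$ is, on the isomorphism class of $\mathcal{A}$, already implied by a single $\varphi_i$; conjoining that $\varphi_i$ with an appropriate lower-complexity closure yields a Scott sentence of strictly lower complexity, contradicting minimality. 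The $\Sigma_1$ and $\Sigma_2$ cases are handled by direct analysis: a genuinely $\Sigma_1$ Scott sentence would make the finitary existential type already isomorphism-determining, forcing the real complexity down to $\Pi_1$ or d-$\Sigma_1$; the $\Sigma_2$ case is analogous, using a slightly longer back-and-forth to push into d-$\Sigma_2$.

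For the existence half, I would dispatch the easier complexities using classical examples. Rigid finite structures give $\Pi_1$; carefully chosen linear orders (e.g., $\omega^\alpha$ and related iterates, or tree-of-rank-$\alpha$ constructions) realize $\Pi_\alpha$ for every $\alpha \geq 1$; Miller's constructions in \cite{Mi83} produce $\Sigma_\alpha$ Scott complexities for every successor $\alpha \geq 3$ of the form $\beta + 2$ that is not of the form $\lambda+1$, as well as d-$\Sigma_\alpha$ for every successor $\alpha$. In all these cases the complexity can be read off inductively from the level at which automorphism orbits become uniformly definable.

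The genuinely new case is $\Sigma_{\lambda+1}$ for $\lambda$ a limit; this is where the footnoted Turetsky technique enters. I would take as universe the set $V$ of eventually-zero sequences in $\{0,1\}^\omega$ (the vertices of the infinite-dimensional hypercube with finite support), and equip it with further relations coding, at depth $\lambda$, a sequence of invariants distinguishing vertices. The automorphism group is generated by the coordinate reflections $v \mapsto v + e_n$, so the structure is transitive, but fixing any single element $v_0$ makes it rigid. A $\Sigma_{\lambda+1}$ Scott sentence is then obtained by asserting ``there exists a vertex $v$ such that the expanded structure $(V,v)$ satisfies the appropriate $\Pi_\lambda$ description of the rigid pointed hypercube.''

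The main obstacle is the lower bound: proving that no $\Pi_{\lambda+1}$ or d-$\Sigma_{\lambda+1}$ sentence is a Scott sentence for this structure. This requires, for each candidate $\varphi$ of those forms, building a non-isomorphic $\mathcal{B}$ with $\mathcal{B} \models \varphi$. The construction is a back-and-forth that exploits the near-rigidity: the orbit-separating data of the hypercube is only captured in the \emph{limit} at level $\lambda$ and is not uniformly captured at any stage $\beta < \lambda$, so a $\Pi_{\lambda+1}$ sentence (a conjunction of $\Sigma_\lambda$ sentences) lacks the existential witnessing strength to globally pin down which coordinate reflections should act trivially. The d-$\Sigma_{\lambda+1}$ case is then ruled out by combining this argument with the earlier observation that the $\Sigma_{\lambda+1}$ upper bound cannot be lowered to $\Sigma_\beta$ for any $\beta \leq \lambda$.
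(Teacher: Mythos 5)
The statement you are asked to prove is quoted by this paper from Alvir--Greenberg--Harrison-Trainor--Turetsky \cite{AGHT}; the paper gives no proof beyond the citation and a footnote sketching Turetsky's hypercube technique for the one hard case, so your outline has to be measured against that source. Your global decomposition matches it: rule out $\Sigma_1$, $\Sigma_2$ and the limit-level classes, invoke Miller \cite{Mi83} for every listed class except $\Sigma_{\lambda+1}$, and handle $\Sigma_{\lambda+1}$ with the reflection-generated infinite-dimensional hypercube. The soft parts are essentially right in outline, though note that in the limit-level argument a true disjunct $\varphi_i$ is by itself already a Scott sentence (conjoining with the $\Pi_\lambda$ part is only needed to dispose of d-$\Sigma_\lambda$), and the $\Sigma_1$, $\Sigma_2$ exclusions need the actual arguments of \cite{AGHT} rather than the assertion that the complexity is ``forced down.''

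The genuine gap is the lower bound in the $\Sigma_{\lambda+1}$ case, which is the entire content of the new example and which you explicitly leave as ``the main obstacle.'' Your proposed route---for each candidate $\Pi_{\lambda+1}$ sentence build a non-isomorphic model of it---is a per-sentence diagonalization with no indication of how to carry it out, and it is not how the argument goes; the missing idea is the orbit characterization (Montalb\'an's robustness theorem, exactly the tool this paper uses in Claim \ref{claim:lower-bound} inside the proof of Theorem \ref{thm:sc-Ll}): a structure has a $\Pi_{\lambda+1}$ Scott sentence if and only if every tuple has a $\Sigma_\lambda$-definable orbit, so it suffices to exhibit a single tuple whose orbit is not $\Sigma_\delta$-definable for any $\delta<\lambda$, which one does by a level-by-level back-and-forth. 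Moreover, since the reflections act transitively on the finite-support vertices, singleton orbits are trivially definable, so your informal talk of ``pinning down which reflections act trivially'' must be turned into a statement about orbits of tuples (e.g.\ pairs of vertices), and that is precisely where the depth-$\lambda$ invariants have to be specified carefully---none of which appears in the sketch. Finally, the claim that no d-$\Sigma_{\lambda+1}$ sentence is a Scott sentence is false as stated (a $\Sigma_{\lambda+1}$ Scott sentence conjoined with a trivially true $\Pi_{\lambda+1}$ sentence is one); what must be excluded is only $\Pi_{\lambda+1}$, and that automatically excludes every class strictly below $\Sigma_{\lambda+1}$ in the Wadge order.
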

	
	The exact relationships between Scott complexity, parameterless Scott rank, and parameterized Scott rank were first understood in  \cite{AGHT}.
	Table \ref{table:invariants} taken from \cite{Mo21} Chapter II.7 shows how this relationship works.

	\vspace{.1 in}
	
	\begin{table}
		\centering
		\begin{tblr}
			{
				hlines, vlines,
				hline{1,Z} = {2-3}{solid},
				hline{2} = {4}{solid},
				hline{3} = {6-7}{solid},
				colspec={|c|c|c|c|},
			}
			SSC                &    pSR   & uSR          & complexity of parameters \\
			$\Sinf{\alpha+2}$  & $\alpha$ & $\alpha+2$ & $\Pinf{\alpha+1}$ \\
			$\dSinf{\alpha+1}$ & $\alpha$ & $\alpha+1$ & $\Pinf{\alpha}$ \\
			$\Pinf{\alpha+1}$  & $\alpha$ & $\alpha$   & none \\
			\SetCell[r=1,c=4]{c} $\alpha$ limit &&&\\
			$\Sinf{\alpha+1}$  & $\alpha$ & $\alpha+1$ & $\Pinf{\alpha}$ \\
			$\Pinf{\alpha}$    & $\alpha$ & $\alpha$   & none \\
		\end{tblr}
		
		\caption{Relationship of the different Scott invariants.
			The last column contains the complexity of the automorphism orbit of the
			parameters involved in the parameterized Scott rank.}
		\label{table:invariants}
	\end{table}
	\vspace{.1 in}

	Gonzalez and Rossegger \cite{RosseggerGonzalez} recently gave an almost-complete classification of the Scott ranks of linear orders. Linear orders are a particularly interesting case because they are one of the known examples in which Vaught's conjecture holds \cite{St78}, and the proof of this fact uses Scott rank. Moreover, Gonzalez and Rossegger showed that all of the possible Scott complexities can be achieved by a linear order, except for $\Sigma_3$, which they show is impossible, and $\Sigma_{\lambda + 1}$ for $\lambda$ a limit ordinal, which they leave open \cite[Question 3.13]{RosseggerGonzalez}. This is the same case that was originally left unresolved by Miller.
	
	In the first half of the paper, we close this last gap by showing that for every limit ordinal $\lambda$, there is a linear order of Scott complexity $\Sigma_{\lambda+1}$.
	
	\begin{theorem}\label{thm:main}
		For each limit ordinal $\lambda$ there is a linear order of Scott complexity $\Sigma_{\lambda+1}$.
	\end{theorem}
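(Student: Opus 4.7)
My plan is to realize $L_\lambda$ as an ordered sum indexed by $\mathbb{Z}$,
\[
L_\lambda \;=\; \sum_{n \in \mathbb{Z}} G_{f(n)},
\]
where $f\colon \mathbb{Z} \to \mathbb{N} \cup \{\infty\}$ is an almost periodic but not periodic function (for instance one built from shifts of the $p$-adic valuation, as the abstract suggests) and each $G_k$ is a ``gadget'' linear order whose Scott rank is tightly controlled and tends cofinally to $\lambda$ as $k \to \infty$ (with $G_\infty$ being a limit gadget of rank $\lambda$, perhaps, or rigid in some other useful way). The almost periodicity is what makes the global description require an extra quantifier layer, while every local window stays strictly below $\lambda$.

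For the upper bound, a single element $a$ lying in some block $G_{f(n_0)}$ should already identify the entire bi-infinite block pattern around it: the automorphism orbit of $a$ ought to be $\Pi_\lambda$-definable, since checking ``the block containing $x$ is a $G_k$ and the blocks at each finite offset from it match the pattern read off $f$'' is a countable conjunction of statements each of complexity $\Sigma_\alpha$ for some $\alpha < \lambda$. Combined with a $\Pi_\lambda$ description of each individual gadget, this gives a $\Pi_\lambda$ Scott sentence for $(L_\lambda, a)$, which existentially quantified yields a $\Sigma_{\lambda+1}$ Scott sentence for $L_\lambda$. The table of invariants above tells me this is exactly the profile expected for Scott complexity $\Sigma_{\lambda+1}$: $pSR = \lambda$, $uSR = \lambda+1$, and parameters of complexity $\Pi_\lambda$.

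The main obstacle is the lower bound: ruling out $\Pi_{\lambda+1}$, d-$\Sigma_\lambda$, $\Sigma_\lambda$ and $\Pi_\lambda$ Scott sentences. My strategy is to exploit almost periodicity to produce continuum-many structures $\lambda$-equivalent to $L_\lambda$ that are pairwise non-isomorphic. Because shifts $f_k(n) = f(n+k)$ agree with $f$ on arbitrarily long finite windows, the corresponding orders $L_{f_k}$ should be back-and-forth equivalent to $L_\lambda$ at every level $\alpha < \lambda$; the calibration of the gadgets must be delicate enough that a window of agreement of a prescribed length translates into the correct back-and-forth level, and that gadgets themselves contribute the right local rank. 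Passing to nonstandard models of the almost periodic sequence then produces $2^{\aleph_0}$ pairwise non-isomorphic $\lambda$-equivalents of $L_\lambda$. This abundance rules out a $\Pi_{\lambda+1}$ Scott sentence (which would pin down $L_\lambda$ among its $\lambda$-equivalents) and, via Table~\ref{table:invariants}, any lower Scott complexity, leaving $\Sigma_{\lambda+1}$ as the true value.

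The hardest step will be the fine tuning of the gadgets $G_k$ and the verification that the back-and-forth relation on $\sum_n G_{f(n)}$ is controlled by the pattern of agreement of $f$ at the matching scale. Once that coupling is in place, the continuum-many nonisomorphic $\lambda$-equivalents should fall out of the almost periodicity of $f$ directly.
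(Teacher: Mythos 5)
Your overall architecture (a $\mathbb{Z}$-indexed sum of blocks governed by an almost periodic $f$, a parameter in one block giving a $\Pi_\lambda$ Scott sentence for the pointed structure and hence a $\Sigma_{\lambda+1}$ Scott sentence) is the same as the paper's, but your lower bound rests on a false inference. You propose to rule out a $\Pi_{\lambda+1}$ Scott sentence by exhibiting continuum many pairwise non-isomorphic structures $\equiv_\lambda$-equivalent to $L_\lambda$, on the grounds that such a sentence ``would pin down $L_\lambda$ among its $\lambda$-equivalents.'' It would not: $\Pi_{\lambda+1}$ sentences are not preserved by $\equiv_\lambda$ (only sentences of complexity $\Pi_\lambda$/$\Sigma_\lambda$ and below transfer), so a structure can have a $\Pi_{\lambda+1}$ Scott sentence while having many non-isomorphic $\lambda$-equivalent companions --- indeed the paper proves exactly this: every structure of Scott complexity $\Pi_{\lambda+1}$ has a non-isomorphic $\lambda$-equivalent (the theorem stating there is no $L_\lambda$ whose whole $\equiv_\lambda$-class has complexity $\Pi_{\lambda+1}$). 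So abundance of $\lambda$-equivalents rules out $\Pi_\lambda$ but not $\Pi_{\lambda+1}$, and your argument never eliminates $\Pi_{\lambda+1}$, which is the only thing left to eliminate once the $\Sigma_{\lambda+1}$ upper bound is in hand. The paper's lower bound is instead \emph{internal} to $L_\lambda$: if $k$ is a multiple of the period of the truncation $\min(f,i)$, then the delimiting elements satisfy $a_0 \equiv_{\delta_i} a_k$, while non-periodicity of $f$ prevents the orbit of $a_0$ from being $\Sigma_\lambda$-definable; Montalb\'an's robustness theorem (equivalently, the Gonzalez--Rossegger unstable $\lambda$-sequence criterion) then excludes a $\Pi_{\lambda+1}$ Scott sentence. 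Your shift idea is the right germ, but it must be applied to automorphism orbits inside the one structure, not to the family of $\lambda$-equivalent structures.

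A second, more technical gap is that your sum $\sum_{n\in\mathbb{Z}} G_{f(n)}$ has no delimiters, so there is no reason the block decomposition is canonical or definable; ``the block containing $x$'' need not be expressible at all, and blocks can interact across boundaries in the back-and-forth analysis. The paper interleaves a fixed delimiter $K$ of Scott rank $<\lambda$, writing $\sum_{n\in\mathbb{Z}}(1+L_{f(n)}+1+K)$, and imposes the condition that intervals isomorphic to $K$ occur only inside the written $K$ blocks; this makes the decomposition definable by low-complexity formulas ($D_K$, $\varphi^{\mathbb{N}}_K$, $\psi_{n,K}$), which is what actually delivers the $\Pi_\lambda$ Scott sentence for $(L_\lambda,a_0)$ and the recovery of each block's isomorphism type. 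Without some substitute for this delimiter/definability machinery, both your upper bound and your calibration of $\equiv_\alpha$ with windows of agreement of $f$ remain unproven. (The continuum-many $\lambda$-equivalent copies you aim for are a genuine theorem of the paper, via $2$-adic shifts, but even there each copy's complexity is established by the orbit argument, not deduced from the abundance.)
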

	
	\noindent Thus, all of the possible Scott complexities for arbitrary structures are Scott complexities of linear orders, except for $\Sigma_3$. This example also gives the second known way of constructing a structure of Scott complexity $\Sigma_{\lambda+1}$ after the example described above, and the first known example in a natural class of non-universal structures.
	
	The proof of Theorem \ref{thm:main} is a construction that takes as input a certain sequence of linear orders $(L_i)_{i \in \omega}$ whose Scott ranks approach $\lambda$ from below, and an appropriate delimiting linear order $K$, and produces a linear order $L_\lambda$ with Scott complexity $\Sigma_{\lambda + 1}$. By applying this construction to different sequences and delimiters, we get the following particular examples:
	
	\begin{restatable}{corollary}{rigid}
		\label{thm:rigid}
		For each limit ordinal $\lambda$ there is a rigid linear order of Scott complexity $\Sigma_{\lambda + 1}$.
	\end{restatable}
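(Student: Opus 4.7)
The plan is to apply the construction of Theorem~\ref{thm:main} with all of its ingredients chosen to be rigid. That construction assembles $L_\lambda$ out of copies of the building blocks $L_i$ arranged according to the values of an almost-periodic function $f\colon \mathbb{Z}\to\mathbb{N}\cup\{\infty\}$, separated by copies of the delimiter $K$. I would choose $(L_i)_{i\in\omega}$ to be a sequence of pairwise non-isomorphic rigid linear orders whose Scott ranks strictly increase to $\lambda$; for any limit $\lambda$ this is easy to arrange, e.g.\ by taking $L_i = \omega^{\alpha_i}$ for a strictly increasing sequence $\alpha_i\to\lambda$. I would choose $K$ to be a rigid linear order that is definable-in-place within $L_\lambda$ in the sense that its occurrences can be distinguished from arbitrary convex subsets of the blocks $L_i$. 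Finally, I would choose $f$ to have trivial shift stabilizer, i.e.\ so that no nonzero integer shift of $f$ equals $f$; the shifted $p$-adic valuation singled out in the abstract is an example, since the unique position of the value $\infty$ pins the shift.

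With these choices in hand, the rigidity of $L_\lambda$ unfolds in three layers. Given any automorphism $\sigma$ of $L_\lambda$, the distinguishability of $K$ forces $\sigma$ to send the delimiter copies to delimiter copies. Since the delimiters partition $L_\lambda$ into blocks indexed by $\mathbb{Z}$, this induces an order-preserving bijection of $\mathbb{Z}$, necessarily a shift. Because the $L_i$ are pairwise non-isomorphic, that shift must preserve the labeling $f$, and by the trivial shift stabilizer of $f$ it must be the identity shift. Consequently each $L_i$-block and each copy of $K$ is sent to itself, and the rigidity of each $L_i$ and of $K$ forces $\sigma$ to be the identity on each, hence globally.

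The main obstacle I expect is calibrating the delimiter $K$: it must simultaneously (i) be rigid, (ii) be sufficiently distinguishable within $L_\lambda$ to license the first layer of the rigidity argument, and (iii) remain compatible with whatever hypotheses the proof of Theorem~\ref{thm:main} imposes in order to compute the Scott complexity as $\Sigma_{\lambda+1}$. Verifying that the construction of Theorem~\ref{thm:main} retains enough flexibility to accommodate these extra constraints — so that the $\Sigma_{\lambda+1}$ complexity calculation goes through for these particular rigid choices — is the step I would spend the most care on; the rest of the argument is the three-layer block analysis just sketched.
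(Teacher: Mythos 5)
Your plan is the paper's plan: take the $\Sigma_{\lambda+1}$ construction of Theorem \ref{thm:sc-Ll} with $L_i=\omega^{\delta_i}$, a rigid delimiter $K$, and an almost periodic $f$, then show any automorphism maps delimiter copies to delimiter copies, induces a shift of $\mathbb{Z}$ which must be trivial, and is then the identity on each rigid block. Two remarks. First, the one condition you impose on $f$ (trivial shift stabilizer) is automatic: by the paper's definition an almost periodic function is in particular \emph{not} periodic, and a nonzero shift fixing $f$ is exactly a period; so once the pairwise non-isomorphism of the $\omega^{\delta_i}$ forces the induced shift to preserve $f$, non-periodicity already kills it.

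Second, the step you defer --- exhibiting a $K$ that is simultaneously rigid and a legitimate delimiter for $\omega^{\delta_i}$ in the sense of condition (5) of Definition \ref{defn:l-mixable} --- is in fact the only content of the paper's proof beyond Theorem \ref{thm:sc-Ll}, so as written your argument has a hole exactly there. Note that the suitable delimiters listed for $\omega^{\delta_i}$ in the paper's examples, such as $\zeta$ or shuffles involving $\eta$ or $\omega^*$, are not rigid, so the choice is not automatic. The paper's resolution: fix a bijection $i\colon\mathbb{N}\to\eta$ and let $K=\sum_{q\in\eta} i^{-1}(q)$, the densely ordered sum in which each finite order type $\mathbf{k}$ occurs as a block exactly once. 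This $K$ is rigid (the maximal finite blocks are the $\sim_1$-classes, they have pairwise distinct sizes, hence each is fixed setwise, and finite orders are rigid), and checking that $(\{\omega^{\delta_i}\},K)$ is a $\lambda$-mixable pair --- in particular that copies of $K$ can only occur inside the written $K$ blocks, since $K$ is not scattered while $\omega^{\delta_i}$ is --- is routine. Supplying this (or some equally explicit) $K$ and verifying condition (5) is what you still need to do to close the argument; after that your three-layer rigidity analysis coincides with the paper's.
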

	
	\noindent This is the first known example of a rigid structure with this Scott complexity.
	
	\begin{restatable}{corollary}{scattered}
		\label{thm:scattered}
		For each limit ordinal $\lambda$ there is a scattered linear order of Scott complexity $\Sigma_{\lambda+1}$.
	\end{restatable}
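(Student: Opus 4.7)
The plan is to run the construction of Theorem \ref{thm:main} using only scattered inputs and verify that scatteredness is preserved throughout.

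First, I would select the sequence $(L_i)_{i \in \omega}$ so that each $L_i$ is a scattered linear order and the Scott ranks of the $L_i$ are cofinal in $\lambda$. Scattered linear orders of arbitrarily high countable Scott rank below $\lambda$ are well known to exist --- suitable ordinals such as $\omega^\alpha$, or the finer scattered examples appearing in the Gonzalez--Rossegger classification, supply such a sequence for every countable limit $\lambda$. Next, I would choose the delimiting linear order $K$ to be scattered. The natural choice $K = \mathbb{Z}$, which matches the domain of the almost-periodic functions $f \colon \mathbb{Z} \to \mathbb{N} \cup \{\infty\}$ driving the construction, is itself scattered.

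The remaining step is to verify that on such inputs the resulting $L_\lambda$ is scattered. Since the construction assembles $L_\lambda$ as a $\mathbb{Z}$-indexed sum of blocks, where each block is built from finitely or countably many copies of the $L_i$ with block sizes dictated by $f$, scatteredness should follow from the standard closure fact that a sum of scattered linear orders indexed by a scattered order is again scattered (proved by induction on Hausdorff rank).

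The main obstacle is to confirm that the construction does not secretly introduce density --- for example, by using some auxiliary dense indexing to encode the almost-periodic function or to distinguish blocks of equal size. If it does, the fix would be to re-encode the relevant information using scattered gadgets (e.g., finite markers inside each block, or $\omega$-chains of auxiliary points) that carry the same information without creating a copy of $\mathbb{Q}$. Once every ingredient of the construction is scattered and the overall assembly is exhibited as a scattered sum of scattered pieces, Corollary \ref{thm:scattered} follows directly from Theorem \ref{thm:main}.
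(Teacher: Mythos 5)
Your proposal is correct and takes essentially the same route as the paper, which sets $K=\zeta$ and $L_i=\omega^{\delta_i}$ and notes that the resulting $\mathbb{Z}$-sum is scattered, exactly the scattered-sum-over-a-scattered-index argument you give. Your worry about hidden density is moot since the construction is literally the explicit sum $\sum_{n\in\mathbb{Z}}(1+L_{f(n)}+1+K)$; the only verification actually needed (which the paper treats as routine) is that $(\{\omega^{\delta_i}\}_{i\in\omega},\zeta)$ forms a $\lambda$-mixable pair, in particular condition (5) of Definition \ref{defn:l-mixable}.
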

	
	\noindent This is the first known example of a scattered linear order with Scott complexity $\Sigma_\alpha$ for any $\alpha$, not just the successor of a limit ordinal. We also show in Proposition \ref{prop:no-scatter-sigma4} that a scattered linear order cannot have Scott complexity $\Sigma_4$.
	
	In the second half of the paper, we analyze the above construction closely to demonstrate a surprising abundance of Scott complexity  $\Sigma_{\lambda+1}$ structures.
	In particular, we produce continuum many Scott complexity $\Sigma_{\lambda + 1}$ structures which form a $\equiv_\lambda$-equivalence class.
	
	\begin{restatable}{theorem}{maintwo}\label{thm:main2}
		For each limit ordinal $\lambda$, there is a linear ordering $L_\lambda$ whose $\equiv_\lambda$-equivalence class contains continuum many non-isomorphic models and they are all of Scott complexity $\Sigma_{\lambda+1}$.
	\end{restatable}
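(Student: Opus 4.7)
My plan is to realize the construction of Theorem \ref{thm:main} as a family of linear orders parametrized by a continuum-sized space of almost periodic functions. Taking as prototype the $p$-adic valuation $v_p\colon\mathbb{Z}\to\mathbb{N}\cup\{\infty\}$ (with $v_p(0)=\infty$), I would consider the shifts $f_\alpha(n):=v_p(n+\alpha)$ indexed by $\alpha\in\mathbb{Z}_p$. For each $\alpha$, let $L_\alpha$ denote the linear order built by the procedure of Theorem \ref{thm:main} applied to $f_\alpha$, and take $L_\lambda:=L_0$. The goal then splits into three claims: (A) produce continuum many non-isomorphic $L_\alpha$; (B) show that every $L_\alpha$ lies in the $\equiv_\lambda$-class of $L_\lambda$; and (C) show that every $M\equiv_\lambda L_\lambda$ has Scott complexity $\Sigma_{\lambda+1}$.

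For (A), two shifts $f_\alpha$ and $f_\beta$ with $\alpha-\beta\notin\mathbb{Z}$ disagree at infinitely many positions and with arbitrarily large valuations, so the resulting $L_\alpha$ and $L_\beta$ are non-isomorphic; this produces $2^{\aleph_0}$ isomorphism types. For (B), the key point is that every $f_\alpha$ has the same set of finite factors as $v_p$ (since $v_p$ is uniformly recurrent), so almost periodicity supplies, for each $\gamma<\lambda$, a back-and-forth system of rank $\gamma$ between $L_\alpha$ and $L_\beta$: any finite assembly of blocks in $L_\alpha$ is matched by an assembly in $L_\beta$, because both functions reproduce the same local windows arbitrarily far out. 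Claim (C) splits into an upper and a lower bound. The $\Sigma_{\lambda+1}$ upper bound for any $M\equiv_\lambda L_\lambda$ follows from the same argument as in Theorem \ref{thm:main}: a $\Pi_\lambda$ description of the local factor set, together with a countable disjunction over the almost periodic global configurations consistent with it, yields a $\Sigma_{\lambda+1}$ Scott sentence. For the lower bound, by Table \ref{table:invariants} it suffices to rule out a $\Pi_{\lambda+1}$ Scott sentence for $M$, which should follow from the aperiodicity of the almost periodic witness attached to $M$ — the same obstruction that drives the lower bound in Theorem \ref{thm:main}.

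The main obstacle is the uniform lower bound in (C): one must show that \emph{every} structure $\lambda$-equivalent to $L_\lambda$ still carries an aperiodicity witness strong enough to force Scott complexity exactly $\Sigma_{\lambda+1}$. I anticipate attacking this via a structural dichotomy. If $M\equiv_\lambda L_\lambda$, then $M$ inherits, modulo $\lambda$-equivalence, the same inventory of local patterns; any linear order carrying that inventory must either be isomorphic to some $L_g$ for an almost periodic non-periodic $g$ sharing the finite factor set of $v_p$ (in which case the analysis of Theorem \ref{thm:main} applies directly), or admit a $\Pi_{\lambda+1}$ Scott sentence — which by a periodicity-extraction argument would produce an almost periodic function that is in fact periodic, contradicting the $\lambda$-theory inherited from $L_\lambda$. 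Once this dichotomy is established, claims (A)--(C) combine to give Theorem \ref{thm:main2}.
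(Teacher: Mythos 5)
Your overall strategy is the paper's: shifts $f_\alpha(k)=v_p(k-\alpha)$ of the $p$-adic valuation, with shift-equivalence $\EZ$ governing isomorphism and the finite-window equivalence $\Efin$ governing $\equiv_\lambda$. But there is a genuine gap at exactly the step you flag as the ``main obstacle,'' and your proposed dichotomy does not close it. The paper's proof of the statement rests on a concrete structural result: if $M\equiv_\lambda L_f$, then $M$ satisfies the delimiter-recognition sentence $\varphi^{\Z}_K$ of Lemma \ref{lem:Z-sum} (of complexity $<\lambda$), so $M$ decomposes uniquely as $\sum_{n\in\Z}(1+M_n+1+K)$, and $M$ also satisfies a $\Pi_\lambda$ sentence true in $L_f$ saying that every block between consecutive delimiters is either isomorphic to some $L_i$ or $\equiv_{\delta_i} L_i$ for all $i$ (hence a limit structure); this shows $M$ is a $(g,\Z)$-sum with $g\Efin f$, after which Theorem \ref{thm:sc-Ll} and Proposition \ref{IsomorphismChar} finish the count. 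Your second horn --- ``or $M$ admits a $\Pi_{\lambda+1}$ Scott sentence, which by a periodicity-extraction argument yields a periodic function'' --- is not an argument: nothing you have said explains why a structure sharing the $\Pi_\lambda$ theory of $L_\lambda$ but not of the form $L_g$ would have a $\Pi_{\lambda+1}$ Scott sentence, nor how a Scott sentence for $M$ would ``extract'' a periodic function. Relatedly, your upper-bound sketch (``a countable disjunction over the almost periodic global configurations'') cannot by itself be a Scott sentence, since a Scott sentence must isolate a single isomorphism type; the correct move (as in Claim \ref{claim:upper-bound}) is to quantify a parameter and give a $\Pi_\lambda$ description relative to it, and this is available only after you know $M\cong L_g$.

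A second omission is the treatment of infinite values and limit structures, which is not cosmetic. You take $L_\lambda:=L_0$ built from $f_0=v_p$, which has $f_0(0)=\infty$, but never say which linear order occupies that position; and even if you chose $\alpha\notin\Z$ so that $f_\alpha$ is finite-valued, the $\Efin$-class of $f_\alpha$ contains functions with an infinite value (all $f_t$ with $t\in\Z$, by Lemma \ref{lem:efin-equiv}), so the $\equiv_\lambda$-class of $L_\lambda$ necessarily contains $\Z$-sums in which some block is a \emph{limit structure} of the $\lambda$-mixable pair. Unless the pair is chosen to have a unique limit structure (equivalently, a limit structure of Scott complexity $\Pi_\lambda$, as in Example \ref{piLambda}), those models need not have Scott complexity $\Sigma_{\lambda+1}$ --- the remark following Theorem \ref{thm:sc-Ll} shows they can instead have complexity $\text{d-}\Sigma_{\lambda+1}$ or worse --- and the theorem's claim that \emph{all} models in the class have complexity $\Sigma_{\lambda+1}$ would fail. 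So you must fix a pair with a unique limit structure and carry that hypothesis through claims (A)--(C). Finally, claim (A) as written only asserts that the functions differ; you still need Proposition \ref{IsomorphismChar} (an isomorphism of the sums must respect the maximal $K$-copies picked out by $D_K$, hence induces a shift), though that part is routine to fill in.
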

	
	\noindent Thus, while one might have expected that there are few examples of structures of Scott complexity $\Sigma_{\lambda+1}$, we have in fact shown that there are many such examples.
	The above theorem represents a new phenomenon even for general structures and even if the $\lambda$ above is replaced by any non-limit ordinal $\alpha$.
	
	In fact, it demonstrates two novel properties simultaneously.
	Firstly, it gives continuum many Scott complexity $\Sigma_{\alpha+1}$ structures that are all maximally similar (i.e. related by $\equiv_\alpha$).
	We are not aware of any prior example in the literature of even a pair of structures $M_\alpha\equiv_\alpha N_\alpha$ with Scott complexities $\Sigma_{\alpha+1}$.
	Secondly, it gives an example of a $\Pi_{\lambda}$ sentence (namely the one characterizing the $\equiv_\lambda$ theory of $L_\lambda$) with the property that all models of that sentence are Scott complexity $\Sigma_{\lambda+1}$, and thus none of them have a $\Pi_{\alpha+1}$ Scott sentence.
	
	Montalb\'an asked at the 2013 BIRS Workshop on Computable Model Theory whether every $\Pi_\alpha$ sentence had a model with a $\Pi_{\alpha+1}$ Scott sentence; at the time, it seemed possible (perhaps even likely) that this was true.	
	Previously, Harrison-Trainor \cite{HT18} was able to carefully produce, for any given $\alpha$, a $\Pi_2$ sentence all of whose model are Scott rank\footnote{The notion of Scott rank used in \cite{HT18} was one defined via the stabilization of the back-and-forth relations $\equiv_\alpha$.} $\alpha$.
	Thus, though we are not the first to answer Montalb\'an's question, we give a new counterexample.
	To the best knowledge of the authors, Harrison-Trainor's work in \cite{HT18} was the only known example giving an answer to Montalb\'an's question until now.
	It is surprising then, to find an example of this phenomenon outside of a tailor-made solution and in a natural class like linear orderings.
	Furthermore, Harrison-Trainor's construction does not control the structures to the level of Scott complexity and it is not difficult to confirm that his produced example of Scott rank $\lambda+1$ is not of complexity $\Sigma_{\lambda+1}$.
	
	In contrast to the above result, we show that there is no way to replicate the above construction with $\Pi$ sentences. Note that this covers all structures, not just linear orderings. 
	
	\begin{restatable}{theorem}{nopistr}
		There is no limit ordinal $\lambda$ and structure $L_\lambda$ such that every $M$ with $M\equiv_\lambda L_\lambda$ has Scott complexity $\Pi_{\lambda+1}$.
	\end{restatable}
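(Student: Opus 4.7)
The plan is a proof by contradiction. Suppose, for some limit ordinal $\lambda$ and some countable structure $L := L_\lambda$, every countable $M$ with $M \equiv_\lambda L$ has $SSC(M) = \Pi_{\lambda+1}$. Taking $M = L$, Table~\ref{table:invariants} gives $uSR(L) = \lambda$, so every orbit of tuples in $L$, and likewise in every $M \equiv_\lambda L$, is $\Sigma_\lambda$-definable. Because $\lambda$ is a limit, an orbit-defining formula $\bigvee_{\beta<\lambda} \phi_\beta$ with $\phi_\beta \in \Pi_\beta$ must have each disjunct hit by the orbit actually defining the orbit on its own: any $\phi_\beta$ satisfied by some orbit member has $\phi_\beta(M)$ sandwiched between the orbit and the full $\Sigma_\lambda$-class, hence equal to the orbit. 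Thus every tuple in any $M \equiv_\lambda L$ has a $\Pi_\beta$-principal type for some $\beta<\lambda$ depending on the tuple.

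The main step is a back-and-forth argument showing that any two countable $M_1, M_2 \equiv_\lambda L$ are isomorphic, using these isolators as guides. Given $\bar c \in M_1$ with type isolated by $\phi_{\bar c} \in \Pi_\beta$ ($\beta<\lambda$), the $\Sigma_{\beta+1}$-sentence $\exists \bar x\, \phi_{\bar c}(\bar x)$ holds in $M_1$, hence by $M_1 \equiv_\lambda M_2$ also in $M_2$, producing a partner $\bar d \in M_2$ of the same principal type. To extend $\bar c \mapsto \bar d$ by some $c' \in M_1$, let $\phi' \in \Pi_{\beta'}$ isolate the type of $(\bar c, c')$; then $\forall \bar x\,(\phi_{\bar c}(\bar x) \to \exists y\, \phi'(\bar x, y))$ has complexity below $\lambda$, is true in $M_1$, and so holds in $M_2$, producing a matching $d' \in M_2$. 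Multiplicities of realizers (``at least $k$'' is $\Sigma_{\beta+1}$; ``infinitely many'' is a countable conjunction of these and thus a $\Pi_\lambda$-sentence, using that $\lambda$ is a countable limit ordinal) are preserved by $\equiv_\lambda$, so the back-and-forth never stalls and yields $M_1 \cong M_2$.

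Consequently, the $\equiv_\lambda$-class of $L$ is a single isomorphism class. Since the $\equiv_\lambda$-class of a countable structure is captured by a single $\Pi_\lambda$-sentence (the rank-$\lambda$ Scott sentence from Scott's analysis), $L$ admits a $\Pi_\lambda$ Scott sentence. But $SSC(L) = \Pi_{\lambda+1}$ forces $L$ to have no $\Pi_\lambda$ Scott sentence, a contradiction. The main technical obstacle is the book-keeping in the back-and-forth: verifying that at each stage the fresh isolator lies at a bounded level $\Pi_\beta$ with $\beta<\lambda$, and that the ``infinitely many realizers'' clause is genuinely $\Pi_\lambda$-expressible (which uses countability of $\lambda$). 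Both are standard facts of infinitary logic but require careful attention to make the argument airtight.
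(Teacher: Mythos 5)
Your proposal is correct in substance, but it takes a genuinely different route from the paper. The paper argues pointwise: given any $M$ with Scott complexity $\Pi_{\lambda+1}$ (so $uSR(M)=\lambda$), it invokes the Gonzalez--Rossegger characterization to extract an unstable $\lambda$-sequence $(a_i)$ in $M$, then uses Montalb\'an's limit lemma (Lemma XII.6 of \cite{Mo21}) on the chain $(M,a_0)\equiv_{\delta_0}(M,a_1)\equiv_{\delta_1}\cdots$ to build a structure $(N,b)$ with $(N,b)\equiv_{\delta_i}(M,a_i)$ for all $i$; transferring the instability statements shows the orbit of $b$ is not $\Sigma_\lambda$-definable, so $uSR(N)>\lambda$ and $N\equiv_\lambda M$ is an explicit witness of a different Scott complexity in the class. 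You instead prove a uniqueness statement: since $SSC=\Pi_{\lambda+1}$ forces all orbits to be $\Sigma_\lambda$-definable, and at a limit level each orbit is then isolated by a single formula of complexity $<\lambda$ (your ``sandwich'' remark should be phrased via automorphism-invariance of definable sets: a nonempty invariant subset of an orbit is the orbit), an atomic-model-style back-and-forth shows any two $\equiv_\lambda$-equivalent structures with this isolation property are isomorphic; hence under the hypothesis the $\equiv_\lambda$-class is a single isomorphism type, its $\Pi_\lambda$ axiomatization is a Scott sentence, contradicting $SSC(L_\lambda)=\Pi_{\lambda+1}$. Each approach buys something: the paper's proof is stronger pointwise (every class containing a $\Pi_{\lambda+1}$-complexity structure also contains one of unparametrized Scott rank $>\lambda$), while yours avoids unstable sequences and the limit lemma entirely and yields the complementary fact that a $\equiv_\lambda$-class contains at most one isomorphism type admitting a $\Pi_{\lambda+1}$ Scott sentence. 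Two points in your sketch need tightening: the claim that the partner $\bar d$ found in $M_2$ has ``the same principal type'' is not automatic from $\bar d\models\phi_{\bar c}$ --- it follows by transferring the sentences $\forall\bar x\,(\phi_{\bar c}\to\psi)$ and $\forall\bar x\,(\phi_{\bar c}\to\neg\psi)$, which have complexity $<\lambda$ because $\lambda$ is a limit, and the same trick is needed at every extension step; and the back direction should be stated explicitly, using isolation of orbits in $M_2$ (available since $M_2\equiv_\lambda L_\lambda$ also falls under the hypothesis). The paragraph about multiplicities of realizers is unnecessary: once the invariant ``same $\Pi_{<\lambda}$-type'' (equivalently $(M_1,\bar c)\equiv_\lambda(M_2,\bar d)$) is maintained, the standard enumeration argument produces the isomorphism without any counting.
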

	
	\noindent In particular, this means that the methods of Harrison-Trainor in \cite{HT18} cannot be directly adapted to the analogous problem for Scott complexity.

	\begin{notation}
		All of our structures are assumed to be countable. We write $\omega, \zeta, \eta$ to mean the order types of natural numbers, integers, and rational numbers, respectively.
	\end{notation}
	
	\section{The construction of linear orders with Scott complexity $\Sigma_{\lambda+1}$}
	
	In this section we define a family of linear orderings that we will later show have Scott complexity $\Sigma_{\lambda+1}$.
	We also prove some basic properties about the objects that we define.
	We begin by defining an important family of functions on the integers.

	\begin{definition}
		A function $f\colon\Z\to\mathbb{N} \cup \{\infty\}$ is \emph{almost periodic} if it is not periodic yet for all $n\in\mathbb{N}$ the function $\Res{f}{n}:=\min(n,f)$ is periodic.\footnote{We chose the name \textit{almost periodic} because the examples we produce using the $p$-adics are almost periodic in the sense of Bohr \cite{Bohr}, which is that for every $\epsilon > 0$ the function is periodic up to error $\epsilon$. We include the requirement that an almost periodic function be non-periodic for brevity because these are the only examples we will be interested in.}
	\end{definition}
	
	There are many examples of almost periodic functions. Perhaps the simplest is the $p$-adic valuation $f(k) = v_p(k)$. This takes on one infinite value at $k = 0$.
	
	There are also examples that output only finite values, such as a non-integer shift of the $p$-adic valuation $f(k) = v_p(k -t)$ for $t \in \mathbb{Z}_p - \mathbb{Z}$. We say that such a function \textit{takes on only finite values}. We will check that these functions are almost periodic in Section \ref{sec:many}, and show that there are continuum-many non-shift-equivalent examples.
	
	
	\subsection{Almost periodic functions taking on only finite values}
	
	For now, consider only almost periodic functions taking on only finite values.
	
	The following definition is key in the construction of a $\Sigma_{\lambda+1}$ linear ordering for a limit ordinal $\lambda$. Unless otherwise noted, intervals mean open intervals, namely, sets of the form $(a,b) = \{x \mid a < x < b\}$ for some $a,b\in L$. 
	
	\begin{definition}\label{defn:l-mixable}
		An ordered pair $(\{L_i\}_{i\in\omega}, K)$ of a sequence of linear orderings $\{L_i\}_{i\in\omega}$ and a single linear ordering $K$ is called a \emph{$\lambda$-mixable pair} if the following properties hold for some  non-zero fundamental sequence $\delta_i\to\lambda$:
		\begin{enumerate}
			\item $\SR(K)<\lambda$
			\item $\SR(L_i)<\lambda$
			\item $L_i\equiv_{\delta_i} L_{i+1}$
			\item $L_i\not\equiv_{\delta_{i+1}} L_{i+1}$
			\item any finite alternating sum $1+L_{a_0}+1+K+1+L_{a_1}+1+K+\cdots+1+L_{a_n}$ has intervals isomorphic to $K$ only within the written $K$ blocks (or as the entire written $K$ block).
		\end{enumerate}
	\end{definition}
	
	Generally speaking, the first two criteria are not too difficult to produce examples of, as Scott rank is a fairly well studied concept in the context of linear orderings.
	The final criterion allows the copies of $K$ to act as delimiters.
	The third and fourth criteria are at the core of the definition.
	These guarantee that these structures resemble $\lambda$-unstable sequences as defined in \cite{RosseggerGonzalez} which are key to understanding the Scott sentence complexities near the limit levels.
	Note that the fourth criterion immediately guarantees that there are infinitely many isomorphism types among the collection $L_i$.
	This property can essentially be used as a substitute for the fourth criterion.
	
	\begin{lemma}\label{thin}
		If the sequence of linear orderings $\{L_i\}_{i\in\omega}$ has that for some non-zero fundamental sequence $\delta_i\to\lambda$:
		\begin{itemize}
			\item $\SR(L_i)<\lambda$
			\item $L_i\equiv_{\delta_i} L_{i+1}$
			\item there are infinitely many isomorphism types among the collection $L_i$,
		\end{itemize}
		then there is some increasing sequence $i(r)$ for which
		\begin{itemize}
			\item $\SR(L_{i(r)})<\lambda$
			\item $L_{i(r)}\equiv_{\delta_{i(r)}} L_{i(r+1)}$
			\item  $L_{i(r)}\not\equiv_{\delta_{i(r+1)}} L_{i(r+1)}$.
		\end{itemize}
	\end{lemma}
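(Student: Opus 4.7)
The plan is to build the subsequence $i(r)$ recursively, getting $L_{i(r)}\equiv_{\delta_{i(r)}} L_{i(r+1)}$ as an automatic consequence of transitivity, while engineering the inequivalence at level $\delta_{i(r+1)}$ by exploiting the assumption that infinitely many isomorphism types appear.

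First I would record a general fact: whenever $\SR(L)<\lambda$ for a limit ordinal $\lambda$, there is some $\beta<\lambda$ such that $L\equiv_\beta M$ implies $L\cong M$ for any structure $M$. This is because $\SR(L)\leq\alpha$ for some $\alpha<\lambda$ gives $L$ a Scott sentence of complexity strictly below $\lambda$ (e.g.\ a $\Pi_{\alpha+1}$ Scott sentence), so $\beta=\alpha+1<\lambda$ witnesses the claim: any $M$ with $L\equiv_{\alpha+1}M$ satisfies that Scott sentence and hence is isomorphic to $L$.

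Next I would construct the subsequence. Set $i(0)=0$. Given $i(r)$, apply the fact just recorded to obtain $\beta_r<\lambda$ controlling $L_{i(r)}$, and choose $j_0>i(r)$ with $\delta_{j_0}\geq\beta_r$, which exists since $\delta_j\to\lambda$. I claim there is some $j>i(r)$ with $L_{i(r)}\not\equiv_{\delta_j} L_j$: otherwise, for every $j\geq j_0$ we would have $L_{i(r)}\equiv_{\delta_j}L_j$ and $\delta_j\geq\beta_r$, forcing $L_j\cong L_{i(r)}$, and together with the finitely many $L_0,\dots,L_{j_0-1}$ this would leave only finitely many isomorphism types in the sequence, contradicting the hypothesis. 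So I may pick the least such $j$ and declare $i(r+1)=j$.

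For the remaining condition $L_{i(r)}\equiv_{\delta_{i(r)}} L_{i(r+1)}$, I would chain the given equivalences: each $L_j\equiv_{\delta_j}L_{j+1}$ for $i(r)\leq j<i(r+1)$, together with the fact that the fundamental sequence satisfies $\delta_j\geq\delta_{i(r)}$ (so the stronger back-and-forth equivalence implies the weaker one), gives $L_j\equiv_{\delta_{i(r)}}L_{j+1}$, and then transitivity of $\equiv_{\delta_{i(r)}}$ finishes. The bound $\SR(L_{i(r)})<\lambda$ is inherited from the hypothesis.

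The only genuine step — and the point where the three hypotheses of the lemma are tied together — is the contradiction argument producing $i(r+1)$: the conversion of the qualitative assumption (infinitely many isomorphism types) into the quantitative inequivalence $L_{i(r)}\not\equiv_{\delta_{i(r+1)}}L_{i(r+1)}$, via the collapse of $\equiv_\beta$ to isomorphism past the Scott rank. Everything else is bookkeeping.
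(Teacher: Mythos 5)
Your proposal is correct and follows essentially the same route as the paper: recursively pick $i(r+1)$ as an index past $i(r)$ where $\equiv_{\delta_{i(r+1)}}$ fails, with existence shown by the same contradiction (if no such index existed, the equivalences at levels above the Scott rank of $L_{i(r)}$ would collapse to isomorphism, leaving only finitely many isomorphism types), the other two conditions being inherited or obtained by chaining. The only cosmetic difference is your threshold $\beta=\alpha+1$ via a $\Pi_{\alpha+1}$ Scott sentence, where the paper uses $\alpha+2$ to stay agnostic about which notion of Scott rank is meant; since $\lambda$ is a limit ordinal this changes nothing.
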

	
	\begin{proof}
		Inductively define such a sequence $i(r)$ as follows. The first two conditions are automatic, so we must satisfy the third condition.
		Let $i(0)=0$.
		Given $i(r)$, let $i(r+1)$ be the first index greater than $i(r)$ for which $L_{i(r)}\not\equiv_{\delta_{i(r+1)}} L_{i(r+1)}$. We note that such an index always exists.
		For the sake of contradiction say that for all $k>i(r)$,  $L_{i(r)}\equiv_{\delta_{k}} L_{k}$.
		Say $\SR(L_{i(r)})=\alpha<\lambda$.
		Fix some $n\geq i(r)$ with $\delta_n \geq \alpha+2$.
		By our assumption, for all $k\geq n$ we have that $L_{i(r)} \equiv_{\delta_{k}} L_{k}$ and thus $L_{i(r)}\cong L_{k}$.
		This is a contradiction to the fact that there are infinitely many isomorphism types among the collection $L_i$.
	\end{proof}
	
	This lemma is particularly useful when defining examples of $\lambda$-mixable pairs.
	So long as our example has that $\SR(L_i)<\lambda$, $L_i\equiv_{\delta_i} L_{i+1}$ and there are infinitely many isomorphism types among the collection $L_i$, we do not need to confirm that $L_{i(r)}\not\equiv_{\delta_{i(r+1)}} L_{i(r+1)}$, instead assuming that we pass to a suitable refinement with this property.
	In particular, we do not even need to confirm that there are $L_i$ with arbitrarily high Scott rank below $\lambda$; this is automatic from the other parts of the definition.
	We now take some space to consider examples in this vein.
	
	\begin{example}
		Fix $\delta_i\to\lambda$ and let $L_i\cong \omega^{\delta_i}$.
		The fact that $\SR(L_i)<\lambda$ and $L_i\equiv_{\delta_i} L_{i+1}$ were established by Ash \cite{As86}.
		Suitable choices for $K$ include $\zeta$ or $\sum_{i\in\eta} P(i)$ with $P(i)\in\{\zeta,\omega^*,\eta, \{\mathbf{k}\}_{k\in\omega}\}$.
		Unsuitable choices for $K$ include orderings such as $\eta\cdot\omega^{\delta_i}$ and $\omega$.
	\end{example}
	
	\begin{example}\label{zeta}
		Fix $\delta_i\to\lambda$ and let $L_i\cong \zeta^{\delta_i}$.
		The fact that $\SR(L_i)<\lambda$ and $L_i\equiv_{\delta_i} L_{i+1}$ were established by Gonzalez and Rossegger \cite[Lemma 3.5 and 3.8]{RosseggerGonzalez}.
		Suitable choices for $K$ include $\omega$ or $\sum_{i\in\eta} P(i)$ with $P(i)\in\{\omega,\omega^*,\eta, \{\mathbf{k}\}_{k\in\omega}\}$.
		Unsuitable choices for $K$ include orderings such as $\eta\cdot\zeta^{\delta_i}$ and $\zeta$.
	\end{example}
	
	The following example uses approximations to the linear ordering $L_\lambda=\sum_{n\in\omega} (n+\zeta^{\delta_n})$ with Scott complexity $\Pi_{\lambda}$ (see \cite[Proposition 3.10 and Theorem 3.11]{RosseggerGonzalez}) to obtain a suitable sequence of $L_i$.
	
	\begin{example}\label{piLambda}
		Fix $\delta_i\to\lambda$.
		Let $\L_i=\sum_{n< i} (n+\zeta^{\delta_n})+\sum_{n\geq i} (n+\zeta^{\delta_i}).$
		The fact that $\SR(L_i)<\lambda$ and $L_i\equiv_{\delta_i} L_{i+1}$ were established by Gonzalez and Rossegger \cite[Theorem 3.11]{RosseggerGonzalez}.
		$\omega$ is a suitable choice for $K$, but $\zeta$ is not.
	\end{example}
	
	With some intuition for $\lambda$-mixable pairs established with the above examples, we are now ready to define the construction that will have Scott complexity $\Sigma_{\lambda+1}$.
	
	\begin{definition}
		Given a $\lambda$-mixable pair $(\{L_i\}_{i\in\omega}, K)$ and a almost periodic function $f\colon\mathbb{Z}\to\mathbb{N}$, we let the \emph{$(f,\mathbb{Z})$-sum}, or simply \emph{$\mathbb{Z}$-sum}, of the pair be defined as:
		\[\sum_{n\in\mathbb{Z}} (1+ L_{f(n)} +1+K) =  \cdots + 1 + L_{f(-1)} + 1 + K + 1 + L_{f(0)} + 1 + K + 1 + L_{f(1)} + 1 + K + \cdots.\]
	\end{definition}
	
	\subsection{Almost periodic functions taking on infinite values}
	
	In order to define $\Z$-sums given functions with potentially infinite values, we need to extend our $\lambda$-mixable pairs to infinity as well.
	
	\begin{definition}
		Given a $\lambda$-mixable pair $(\{L_i\}_{i\in\omega}, K)$, we say that $\N$ is a \emph{limit structure} for the pair if for all $n$ there is an $i$ such that $\N\equiv_{\delta_n} L_i$.
	\end{definition}
	
	\begin{lemma}
		Every $\lambda$-mixable pair has a limit structure.
	\end{lemma}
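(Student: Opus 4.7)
My plan is to realize the limit structure as a countable model of a coherent $\Pi_\lambda$-theory that encodes the back-and-forth behavior of the sequence $\{L_i\}$ at every level below $\lambda$.

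Since $(\delta_n)$ is a fundamental sequence (hence strictly increasing) for $\lambda$, we have $\delta_j \geq \delta_n$ whenever $j \geq n$. Iterating clause (3) of the $\lambda$-mixable pair definition along the chain $L_n \equiv_{\delta_n} L_{n+1} \equiv_{\delta_{n+1}} L_{n+2} \equiv \cdots$, and using transitivity together with the fact that $\equiv_{\delta_j}$ refines $\equiv_{\delta_n}$ for $j \geq n$, I obtain $L_j \equiv_{\delta_n} L_n$ for every $j \geq n$. Thus the $\equiv_{\delta_n}$-type of $L_n$ stabilizes along the tail of the sequence. Letting $\phi_n$ be a $\Pi_{\delta_n+1}$ sentence of $\mathcal{L}_{\omega_1\omega}$ that axiomatizes this type, the chain $\phi_0,\phi_1,\ldots$ satisfies $\phi_{n+1} \vdash \phi_n$, and any countable $\mathcal{N}\models \bigwedge_n \phi_n$ is automatically a limit structure for the pair.

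To produce such an $\mathcal{N}$, I would perform a back-and-forth construction on domain $\omega$. At each stage $s$, I maintain the partial atomic diagram of $\mathcal{N}$ on $\{0,\ldots,s-1\}$ together with, for each $n\leq s$, a finite partial isomorphism $p_{n,s}$ from this partial structure into $L_n$ lying in a back-and-forth family witnessing $\equiv_{\delta_n}$. I then dovetail through all forth- and back-requirements (pushing each element of $\mathcal{N}$ into the domain of every $p_{n,s}$ and each element of $L_n$ into its range), extending at each step via the built-in extension property of the witnessing families at level $\delta_n$. In the limit $\mathcal{N} \equiv_{\delta_n} L_n$ for every $n$.

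The main obstacle I expect is coordinating the infinitely many back-and-forth constructions: when a fresh element is placed into $\mathcal{N}$, its atomic type over the elements already present must agree with the image chosen in every $L_n$ simultaneously. I would resolve this by a priority scheme in which, when adding a new element at a given stage, the smallest pending $L_n$ dictates its atomic type, and then the extension properties at each higher level $\delta_{n'}$ are invoked (together with the coherence $L_{n'}\equiv_{\delta_n} L_n$ established above) to update $p_{n',s}$ compatibly, so no later obligation conflicts with the commitments already made. This scheduling is the technical heart of the argument, but it is routine once set up.
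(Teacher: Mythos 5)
Your preliminary reductions are fine: the coherence $L_j \equiv_{\delta_n} L_n$ for $j \geq n$ is correct, and a countable model of $\bigwwedge_n \phi_n$ would indeed be a limit structure (in the strong form $\mathcal{N} \equiv_{\delta_n} L_n$ for every $n$). But $\mathcal{L}_{\omega_1\omega}$ has no compactness, so the whole content of the lemma lives in the construction sketched in your last two paragraphs, and that sketch does not work as stated. For an infinite ordinal $\delta_n$, the relation $\equiv_{\delta_n}$ is not witnessed by a single family of finite partial isomorphisms with an unrestricted two-sided extension property; it is witnessed by an ordinal-graded system in which each application of the extension property strictly lowers the ordinal tag, so along an increasing chain of conditions it can be invoked only finitely often for each fixed $n$. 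Your dovetailing plan --- forcing every element of $\mathcal{N}$ into the domain of $p_{n,s}$ and every element of $L_n$ into its range, for every $n$, with the $p_{n,s}$ growing in $s$ --- would in the limit produce isomorphisms $\mathcal{N} \cong L_n$ for all $n$ simultaneously, which is impossible: clause (4) of Definition \ref{defn:l-mixable} makes the $L_n$ pairwise non-isomorphic (if $L_n \cong L_{n+1}$ then $L_n \equiv_{\delta_{n+1}} L_{n+1}$). So the step ``extending at each step via the built-in extension property'' must fail, and your priority scheme does not address the real obstruction, which is not the coordination of atomic types across the different $L_n$ but the degradation of the level-$\delta_n$ extension property with each use. (If instead you allow the $p_{n,s}$ to be revised, nothing in the sketch shows they stabilize or that $\mathcal{N} \equiv_{\delta_n} L_n$ holds in the limit.)

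What is actually needed is a fusion argument on ordinal-tagged conditions: one builds $\mathcal{N}$ as a limit of tuples drawn from the structures $L_{k_s}$ themselves, tagged with ordinals $\gamma_s < \lambda$ recording how much back-and-forth agreement has been promised between consecutive approximations, and one then certifies $\mathcal{N} \equiv_{\delta_n} L_n$ by transfinite induction on those commitments rather than by exhibiting a global back-and-forth family between $\mathcal{N}$ and each $L_n$. This is exactly what the paper outsources: its proof is a one-line appeal to Lemma XII.6 of \cite{Mo21}, which states that any chain $A_0 \equiv_{\delta_0} A_1 \equiv_{\delta_1} A_2 \equiv_{\delta_2} \cdots$ with $\delta_i$ increasing to $\lambda$ admits a structure $\equiv_{\delta_i}$-equivalent to each $A_i$. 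To make your write-up correct you should either cite that lemma, as the paper does, or reproduce its proof; calling the scheduling ``routine once set up'' skips precisely the part that carries the mathematical weight.
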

	
	\begin{proof}
		We can construct the limit structure uniformly using Lemma XII.6 from \cite{Mo21}.
	\end{proof}
	
	It is possible that there are many different limit structures. But we are particularly interested in cases where there is a unique limit structure.
	
	\begin{definition}
		If a $\lambda$-mixable pair $(\{L_i\}_{i\in\omega}, K)$ has a \emph{unique limit structure} then we denote this structure by $L_\infty$.
	\end{definition}
	
	For example, consider a linear ordering $L_\infty$ of Scott complexity $\Pi_\lambda$ and let $L_i$ be an approximating sequence for this structure.
	An instance of such a sequence is given by Example \ref{piLambda}, which approximate $\sum_{n\in\omega} (n+\zeta^{\delta_n})$.
	The $L_i$ have a unique limit structure in this case as they fix the $\Pi_\lambda$ theory of a structure with Scott complexity $\Pi_\lambda$. In fact this type of situation is the only one with a unique limit structure.
	
	\begin{lemma}
		Given a $\lambda$-mixable pair $(\{L_i\}_{i\in\omega}, K)$, they have a unique limit structure $L_\infty$ if and only if they have a limit structure with Scott complexity $\Pi_\lambda$.
	\end{lemma}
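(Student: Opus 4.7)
The plan is to verify both directions of the biconditional by using the canonical sentences that axiomatize back-and-forth classes. Before starting, I would record a transfer observation: since $L_i \equiv_{\delta_i} L_{i+1}$ and the fundamental sequence $(\delta_i)$ is increasing to $\lambda$, chaining yields $L_i \equiv_{\delta_n} L_j$ whenever $i, j \geq n$. In particular, if $\N$ is a limit structure then for every $n$ we may take the witnessing index $i_n \geq n$ arbitrarily large by just passing up the chain.

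For the reverse direction, assume $\M$ is a limit structure of Scott complexity $\Pi_\lambda$, and let $\N$ be any limit structure. For each $n$ pick $i_n, j_n \geq n$ with $\M \equiv_{\delta_n} L_{i_n}$ and $\N \equiv_{\delta_n} L_{j_n}$; by the transfer observation $L_{i_n} \equiv_{\delta_n} L_{j_n}$, so $\M \equiv_{\delta_n} \N$ for every $n$. Now $\M$'s $\Pi_\lambda$ Scott sentence is a countable conjunction of sentences each of complexity $\Sigma_\beta$ with $\beta < \lambda$, and each such $\beta$ satisfies $\beta \leq \delta_n$ for some $n$. Hence $\N$ models every conjunct and therefore $\N \cong \M$, proving uniqueness.

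For the forward direction, assume $L_\infty$ is the unique limit structure. For each $n$, let $\Phi_n$ be the canonical $\Pi_{\delta_n}$ sentence characterizing the $\equiv_{\delta_n}$-class of $L_\infty$ (standard, cf.\ \cite{Mo21}), and set $\Phi := \bigwedge_n \Phi_n$, which is $\Pi_\lambda$. If $\N \models \Phi$ then $\N \equiv_{\delta_n} L_\infty \equiv_{\delta_n} L_{i_n}$ for each $n$, so $\N$ is itself a limit structure and hence $\N \cong L_\infty$ by uniqueness. Thus $\Phi$ is a $\Pi_\lambda$ Scott sentence. To upgrade this to Scott complexity \emph{exactly} $\Pi_\lambda$, I would show $\SR(L_\infty) \geq \lambda$: if $\SR(L_\infty) = \alpha < \lambda$, then choosing $n$ with $\delta_n$ sufficiently above $\alpha$ forces $L_\infty \cong L_{i_n}$, but the transfer observation together with condition~(4) of $\lambda$-mixability (which makes $L_k \not\equiv_{\delta_n} L_j$ for $j \geq n > k$) rules out $L_\infty$ being isomorphic to any single $L_k$. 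The invariants table then forces the Scott complexity to be exactly $\Pi_\lambda$.

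The principal obstacle is careful ordinal bookkeeping: ensuring that $\bigwedge_n \Phi_n$ is genuinely $\Pi_\lambda$ and not of higher complexity, confirming that witnessing indices for limit structures may always be pushed arbitrarily high (so that no fixed $L_k$ masquerades as a limit), and verifying that $L_\infty$'s Scott rank cannot drop below $\lambda$. This last point crucially invokes condition~(4) of $\lambda$-mixability, and is where the care taken in the definition of the $\lambda$-mixable pair really earns its keep.
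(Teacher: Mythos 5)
Your proposal is correct and follows essentially the same route as the paper: uniqueness from a $\Pi_\lambda$ Scott sentence via the observation that any two limit structures are $\equiv_\lambda$, and, conversely, a $\Pi_\lambda$ Scott sentence built as a conjunction of $<\lambda$-complexity sentences pinning down the $\equiv_{\delta_n}$-data together with the fact that $L_\infty \equiv_{\delta_n} L_{i_n}$ but $L_\infty \not\cong L_{i_n}$ to rule out lower complexity (the paper phrases this last step directly in terms of Scott sentences rather than Scott rank and the invariants table, but the content is the same). Your explicit remark that witnessing indices can be pushed arbitrarily high plays the same role as the paper's implicit use of $L_\infty \equiv_{\delta_i} L_i$ for every $i$, so no new gap is introduced.
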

	\begin{proof}
		Suppose that the $L_i$ have a limit structure $L_\infty$ with a $\Pi_\lambda$ Scott sentence. If $L'$ is any other limit structure, then $L_\infty \equiv_\lambda L'$, and since $L_\infty$ has a $\Pi_\lambda$ Scott sentence, $L_\infty \cong L'$.
		
		On the other hand, suppose that the $L_i$ have a unique limit structure $L_\infty$. Then $L_\infty$ is characterized among countable structures by the fact that for all $\alpha < \lambda$ there is $i$ such that $L_\infty \equiv_{\delta_i} L_i$. For each $\alpha$ this can be expressed by a $\Sigma_{< \lambda}$ sentence, and the conjunction of all of these is a $\Pi_\lambda$ Scott sentence. On the other hand, $L_\infty$ cannot have a simpler Scott sentence because for each $i$ we have $L_\infty \equiv_{\delta_i} L_i$ but $L_\infty \ncong L_i$.
	\end{proof} 
	
	We can also show that (5) of Definition \ref{defn:l-mixable} is inherited by the limit structures. We do this below in Lemma \ref{lem:inherit}.

	\begin{definition}
		Give a $\lambda$-mixable pair $(\{L_i\}_{i\in\omega},K)$ and a function $f\colon\Z\to\mathbb{N}\cup\{\infty\}$, the $(f,\Z)$-sums are the linear orders of the form
		$$\sum_{n\in\Z} (1+L^{(n)}_{f(n)} +1+K) $$
		where if $f(n) < \infty$ then $L^{(n)}_{f(n)} = L_{f(n)}$ and if $f(n) = \infty$ then $L^{(n)}_{f(n)}$ is \emph{some} limit structure for $(\{L_i\}_{i\in\omega},K)$, possibly different for each such $n$.
	\end{definition}
	
	If $f$ takes on only finite values, then this is just the $(f,\Z)$-sum as defined before. If $f$ takes on infinite values and $(\{L_i\}_{i\in\omega},K)$ does not have a unique limit structure, then there may be many $(f,\Z)$-sums (and these $(f,\Z)$-sums may not have $\Sigma_{\lambda + 1}$ Scott sentences). However if there is a unique limit structure, then there is only one $(f,\Z)$-sum.
	
	\begin{definition}
		If $(\{L_i\}_{i\in\omega},K)$ has a unique limit structure $L_\infty$, then \textit{the} $(f,\Z)$-sum of $(\{L_i\}_{i\in\omega},K)$ is
		$$\sum_{n\in\Z} (1+L_{f(n)} +1+K).$$ 
	\end{definition}
	
	
	%
	%
	
	\section{Useful Results about Linear Orderings}
	
	Before verifying the construction we will introduce several results about linear orders that we will use throughout the paper. We will make frequent use of the following standard result without mention.
	
	\begin{lemma}[{\cite[Lemma 15.7]{ash2000}}]\label{lem:bfandpartitions}  
		Let $\bar{a} \in A$ and $\bar{b} \in \B$ be tuples of length $n$. Then $(\A,\bar{a})\leq_\alpha (\B,\bar{b})$
		if and only if $(-\infty, a_1)\leq_\alpha (-\infty, b_1)$,
		$(a_n,\infty)\leq_\alpha (b_n,\infty)$, and for all $1 \leq i < n$ we have $(a_i,a_{i+1})\leq_\alpha (b_i,b_{i+1})$.
	\end{lemma}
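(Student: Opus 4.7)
The plan is to prove the lemma by induction on $\alpha$, using the standard inductive characterization of the back-and-forth relations: $\leq_0$ is equivalence of atomic diagrams (up to the appropriate polarity for quantifier-free formulas in the signature of linear orders), $\leq_\alpha$ for limit $\alpha$ is the intersection of the lower $\leq_\beta$, and $(\A,\bar a)\leq_{\alpha+1}(\B,\bar b)$ holds when every finite extension $\bar b\bar d$ of the right side can be matched by an extension $\bar a\bar c$ of the left side with $(\B,\bar b\bar d)\leq_\alpha(\A,\bar a\bar c)$. The base case is immediate: the atomic diagram of an $n$-tuple in a linear order records exactly the order and equalities among its components, and these are forced by each interval $(-\infty,a_1)$, $(a_i,a_{i+1})$, $(a_n,\infty)$ being nonempty or empty appropriately (which $\leq_0$ on the intervals detects). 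The limit case is immediate from the definition.

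For the successor case, I will treat both directions simultaneously using the fact that any new tuple $\bar d=(d_1,\ldots,d_k)$ in $\B$ decomposes uniquely: each $d_j$ lies in one of the intervals $(-\infty,b_1), (b_1,b_2),\ldots,(b_n,\infty)$, or equals some $b_i$. After removing duplicates with the $b_i$'s, the new elements partition into subtuples $\bar d^{(0)},\ldots,\bar d^{(n)}$, one for each interval. Then the picture of $(\B,\bar b\bar d)$ is exactly the same as that of $\bar b$, only with each interval $(b_i,b_{i+1})$ replaced by the "sub-partition" given by $\bar d^{(i)}$. Going forward: if the global back-and-forth holds and we want to witness $(b_i,b_{i+1})\leq_{\alpha+1}(a_i,a_{i+1})$ against a tuple $\bar d^{(i)}$ in $(b_i,b_{i+1})$, we view $\bar d^{(i)}$ as an extension of $\bar b$ and apply the global relation to produce $\bar c\subseteq \A$; since the resulting atomic diagram of $\bar a\bar c$ matches that of $\bar b\bar d^{(i)}$ (by the $\leq_0$ clause inside the induction hypothesis), $\bar c$ must lie inside $(a_i,a_{i+1})$, and applying the induction hypothesis to $(\B,\bar b\bar d^{(i)})\leq_\alpha(\A,\bar a\bar c)$ extracts the required interval relation.

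Going backward is the symmetric construction: given the interval relations at level $\alpha+1$ and an arbitrary new tuple $\bar d$ on the $\B$ side, decompose it as above into $\bar d^{(0)},\ldots,\bar d^{(n)}$, apply each interval's back-and-forth separately to produce $\bar c^{(i)}\subseteq(a_i,a_{i+1})$ with $(b_i,b_{i+1}),\bar d^{(i)})\leq_\alpha(a_i,a_{i+1}),\bar c^{(i)})$, and then invoke the induction hypothesis in the other direction to glue the pieces back together into a single witness $(\B,\bar b\bar d)\leq_\alpha(\A,\bar a\bar c)$. The only subtlety, and the spot I expect will need the most care, is bookkeeping the case when some $d_j$ equals one of the $b_i$: such a coordinate must be matched by the corresponding $a_i$ on the other side, and one should make sure the decomposition into intervals handles these "collision" coordinates cleanly so that the inductive call at level $\alpha$ receives genuinely interval-internal tuples and the final concatenated tuple $\bar a\bar c$ has the same repetition pattern as $\bar b\bar d$.
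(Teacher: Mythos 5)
The paper gives no proof of this lemma at all---it is quoted directly from Ash--Knight (their Lemma 15.7)---and your transfinite induction on $\alpha$ (trivial base and limit cases, and at successors decomposing an extending tuple $\bar d$ into its per-interval pieces and invoking the induction hypothesis in both directions at level $\alpha$ to pass between the global pair and the interval pairs) is precisely the standard argument for this fact, and it is correct, including your handling of coordinates colliding with the $b_i$'s. The one hypothesis worth stating explicitly is that $\bar a$ and $\bar b$ are increasing tuples (as the interval notation presupposes), which guarantees they have the same quantifier-free type and makes your base case and gluing step go through without edge cases involving empty or degenerate intervals.
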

	
	The general idea of the $(f,\Z)$-sums is that the delimiters $K$ are of Scott rank $< \lambda$, so they can be recovered with sentences of complexity $< \lambda$. (5) of Definition \ref{defn:l-mixable} says that there is a unique way to divide up an $(f,\Z)$-sum by its delimiters. We note that this definition is only applied to $\lambda$-mixable pairs and not their limit structures, so we begin by showing that it is inherited by limit structures.
	
	\begin{lemma}\label{lem:inherit}
		Let $(\{L_i\}_{i\in\omega}, K)$ be a $\lambda$-mixable pair. Consider a finite alternating sum $1+L+1+K+1+L+1+K+\cdots+1+L$ where each $L$ is one of the $L_i$ or a limit structure. Then this sum has intervals isomorphic to $K$ only within the written $K$ blocks (or as the entire written $K$ block).
	\end{lemma}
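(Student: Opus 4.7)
The plan is to argue by contradiction, leveraging property (5) of Definition \ref{defn:l-mixable} together with back-and-forth equivalence to reduce the general case (where limit structures may appear among the $L$-blocks) to the already-known case of (5), in which every $L$-block comes from the sequence $\{L_i\}_{i\in\omega}$.

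Suppose toward a contradiction that the finite alternating sum $S=1+L^{(1)}+1+K+\cdots+1+L^{(n)}$ has an interval $(a,b)\cong K$ that is neither contained in nor equal to any of the written $K$-blocks; call such an interval \emph{bad}. Because $\SR(K)<\lambda$, we fix $\gamma<\lambda$ large enough that $M\equiv_\gamma K$ forces $M\cong K$, and then pick $\alpha<\lambda$ with $\alpha>\gamma$. For each $j$ with $L^{(j)}$ a limit structure, the defining property of a limit structure yields some $L_{i(j)}$ with $L^{(j)}\equiv_\alpha L_{i(j)}$; if $L^{(j)}$ is already one of the $L_i$ we set $L_{i(j)}=L^{(j)}$. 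Form the replaced sum
\[
S_*=1+L_{i(1)}+1+K+1+L_{i(2)}+\cdots+1+L_{i(n)},
\]
and let $\bar p$, $\bar p'$ denote the tuples of $1$-elements of $S$ and $S_*$.

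Applying Lemma \ref{lem:bfandpartitions} block-by-block (the intervals cut out by $\bar p$ in $S$ and by $\bar p'$ in $S_*$ are exactly the $L$-blocks and the $K$-blocks, and corresponding blocks are $\equiv_\alpha$) gives $(S,\bar p)\equiv_\alpha(S_*,\bar p')$. The back-and-forth property then furnishes $a',b'\in S_*$ with $(S,\bar p,a,b)\equiv_\gamma (S_*,\bar p',a',b')$. A second use of Lemma \ref{lem:bfandpartitions} on the expanded tuple, combined with compositionality of $\equiv_\gamma$ over linear-order sums, produces $(a,b)\equiv_\gamma (a',b')$, whence $(a',b')\cong K$ by the choice of $\gamma$. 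The atomic portion of the same equivalence guarantees that $a',b'$ stand in the same position relative to $\bar p'$ as $a,b$ do relative to $\bar p$ (same strict inequalities with, and equalities to, each $p_k$), so $(a',b')$ is bad in $S_*$. But every $L$-block of $S_*$ is one of the $L_i$, so property (5) of Definition \ref{defn:l-mixable} applies to $S_*$ directly and forbids bad intervals, a contradiction.

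The main technical hurdle is preserving simultaneously two features of $(a,b)$ across the back-and-forth: the isomorphism type of the interval itself (which is why $\gamma$ must be taken beyond the Scott rank of $K$) and its structural location inside $S$ relative to the block boundaries (which is why the full tuple of $1$-elements must be carried as parameters so that Lemma \ref{lem:bfandpartitions} can certify the position block-by-block). Both demands fit comfortably below $\lambda$ because $\lambda$ is a limit ordinal and $\SR(K)<\lambda$.
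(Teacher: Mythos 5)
Your argument is correct and is essentially the paper's own proof in game-theoretic form: both replace each limit structure by an $L_i$ to which it is sufficiently back-and-forth equivalent, apply Lemma \ref{lem:bfandpartitions} to the two alternating sums with the tuple of $1$-delimiters as parameters, and then transfer property (5) of Definition \ref{defn:l-mixable} across that equivalence --- the paper does the transfer syntactically via a single $\Pi_{\alpha+2}$ formula expressing (5), while you pull a hypothetical bad interval across the back-and-forth relation, which is the same content. The only nitpick is that a single extension step from $(S,\bar p)\equiv_\alpha(S_*,\bar p')$ yields only the one-sided relation $(S,\bar p,a,b)\leq_\gamma(S_*,\bar p',a',b')$ rather than $\equiv_\gamma$; this suffices, provided you choose $\gamma<\lambda$ above the complexity of a $\Pi_\gamma$ Scott sentence for $K$, so that $(a,b)\cong K$ and the preserved atomic position data still force $(a',b')$ to be a bad copy of $K$ in $S_*$.
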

	\begin{proof}
		Let $\alpha = pSR(K) < \lambda$. Let $S$ be the finite alternating sum. Consider the alternating sum $T = 1+L_{a_0}+1+K+1+L_{a_1}+1+K+\cdots+1+L_{a_n}$ obtained by replacing each limit structure $L$ in the original sum by some $L_i \equiv_{\delta_i} L$ with $\delta_i > \alpha+2$. Let $\bar{s}$ be the finitely many delimiting elements marked ``1'' in $S$, and $\bar{t}$ the corresponding delimiting elements of $T$. Then by Lemma \ref{lem:bfandpartitions} we have  $(S,\bar{s}) \equiv_{\delta_i} (T,\bar{t})$. There is a $\Pi_{\alpha+2}$ formula expressing about $(T,\bar{t})$ that it satisfies (5) of Definition \ref{defn:l-mixable}, i.e., that any interval isomorphic to $K$ appears only within the written $K$ blocks. Since $\delta_i > \alpha + 2$, this formula is also true of $(S,\bar{s})$, and so any interval isomorphic to $K$ in $S$ appears only within the written $K$ blocks.
	\end{proof}
	
	%
	%
	We will use the following definability results about finding copies of the delimiter $K$ in an $(f,\Z)$-sum. 
	
	\begin{definition}
		Fix a countable linear ordering $K$.
		We write:
		$$D_K(x,y):= ~~ (x,y)\cong K \land \forall x', y' ~ (x'\leq x \land y\leq y') \to ((x=x' \land y=y') \lor (x',y')\not\cong K).$$ 
	\end{definition}
	
	We express $(x,y)\cong K$ using a Scott sentence for $K$, so that in particular, if $pSR(K)\leq\alpha$ then $D_K$ is $\text{d-}\Sigma_{\alpha+2}$ at worst. Intuitively, $D_K(x,y)$ says that $(x,y)$ is a copy of $K$ and it is maximal with regard to inclusion among such copies. The two "1"s surrounding the $K$ blocks in an $(f,\mathbb{Z})$-sum were specifically defined to have this exact property and no other points will have this property.
	
	\begin{lemma}\label{lem:n-sum}
		Let $pSR(K) \leq \alpha$. Then there is a $\Pi_{\alpha + 5}$ sentence $\varphi^{\mathbb{N}}_K$ such that a linear order $L$ satisfies $\varphi^{\mathbb{N}}_K$ if and only if there are linear orders $M_n$ such that
		\[ L \cong \sum_{n \in \mathbb{N}} (1 + M_n + 1 + K)\]
		and ($*$) any finite partial sum $1+M_{n}+1+K+1+M_{n+1}+1+K+\cdots+1+M_{n+k} + 1 + K$ has intervals isomorphic to $K$ only within the written $K$ blocks (or as the entire written $K$ block).
	\end{lemma}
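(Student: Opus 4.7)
The plan is to use $D_K$ as a canonical device for identifying the written $K$-blocks in any decomposition. Under condition $(*)$, every $K$-interval of $L$ is contained in a written $K$-block, so the written blocks coincide with the maximal $K$-intervals of $L$, which are precisely the intervals $(u,v)$ satisfying $D_K(u,v)$. This reduces the problem to expressing at the right complexity level that: (i) $L$ has a least element; (ii) the $D_K$-pairs exist, are discretely arranged with order type $\omega$, and are cofinal; and (iii) every open $K$-interval of $L$ sits inside some $D_K$-pair. The pieces $M_n$ can then be recovered as the open intervals between the minimum of $L$ and $u_0$, and between consecutive $v_{n-1}$ and $u_n$.

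I take $\varphi^{\mathbb{N}}_K$ to be the conjunction of: (a) $\exists x\, \forall y\, (x \leq y)$; (b) $\exists u\, \exists v\, D_K(u,v)$ together with $\forall z\, \exists u\, \exists v\, (D_K(u,v) \wedge z \leq v)$; (c) $\forall u\, \forall v\, \bigl(D_K(u,v) \to \exists u'\, \exists v'\, (D_K(u',v') \wedge v < u' \wedge \neg \exists u''\, \exists v''\, (D_K(u'',v'') \wedge v < u'' < u'))\bigr)$; (d) $\forall z\, \bigvee_{n \in \mathbb{N}}$ ``there are exactly $n$ pairs $(u,v)$ with $D_K(u,v)$ and $v < z$''; and (e) $\forall u\, \forall v\, \bigl((u,v) \cong K \to \exists u'\, \exists v'\, (D_K(u',v') \wedge u' \leq u \wedge v \leq v')\bigr)$. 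Since $D_K$ is $\text{d-}\Sigma_{\alpha+2}$ and ``$(u,v) \cong K$'' is at worst $\Sigma_{\alpha+2}$, routine complexity bookkeeping shows each conjunct is at most $\Pi_{\alpha+5}$.

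For the forward implication, if $L \cong \sum_{n\in\mathbb{N}}(1+M_n+1+K)$ and satisfies $(*)$, then the written $K$-blocks are precisely the maximal $K$-intervals of $L$, hence are exactly the $D_K$-pairs; they are discretely ordered of type $\omega$ with the left delimiters cofinal, and the least element of $L$ is the leftmost $1$. Conjuncts (a)--(d) follow immediately, and (e) is a direct reformulation of $(*)$. For the converse, assuming $L \models \varphi^{\mathbb{N}}_K$, enumerate the $D_K$-pairs as $(u_0,v_0) < (u_1,v_1) < \cdots$ in order type $\omega$ using (a)--(d), let $m$ be the minimum of $L$, and set $M_0 := (m, u_0)$ and $M_n := (v_{n-1}, u_n)$ for $n \geq 1$. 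Applying Lemma \ref{lem:bfandpartitions}-style reasoning to the exhaustive partition by $m$ and the $u_n, v_n$ yields $L \cong \sum_n (1 + M_n + 1 + K)$, and $(*)$ for this decomposition is immediate from (e) applied to any $K$-interval appearing in a finite partial sum.

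The main subtlety is conjunct (d): we need to express ``order type $\omega$'' of a definable subset without exceeding $\Pi_{\alpha+5}$. The finite-counting trick works because ``exactly $n$ $D_K$-pairs below $z$'' is a Boolean combination of finitely many $D_K$-formulas and thus sits comfortably within the target complexity after the outer $\forall z$. The other place to be careful is that the $\bigvee_{n\in\mathbb{N}}$ disjunction must combine with the outer $\forall z$ in the right order so as not to blow up the complexity; arranging the conjunct as $\forall z \bigvee_n \psi_n(z)$ with each $\psi_n$ a bounded Boolean combination of $D_K$-formulas keeps it within $\Pi_{\alpha+4}$.
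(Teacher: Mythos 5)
Your overall strategy is the same as the paper's (the paper also uses $D_K$ to locate the delimiting $K$-blocks), and your complexity bookkeeping is fine modulo a harmless miscount (your conjunct (d) is really $\Pi_{\alpha+5}$, not $\Pi_{\alpha+4}$, since the countable disjunction of the counting formulas is $\Sigma_{\alpha+4}$ before the outer $\forall z$). The genuine gap is in the converse direction: nothing in your conjuncts (a)--(e) forces distinct $D_K$-pairs to be pairwise disjoint, yet your argument silently assumes this when you ``enumerate the $D_K$-pairs as $(u_0,v_0)<(u_1,v_1)<\cdots$'' and set $M_n:=(v_{n-1},u_n)$. The predicate $D_K$ rules out nested copies of $K$ but not overlapping ones. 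Concretely, take $K=\mathbf{1}$ and $L=\omega$: the $D_K$-pairs are exactly the pairs $(n,n+2)$, which overlap. One checks that $L$ satisfies all of (a)--(e) --- in particular (e) holds because every one-point interval $(n,n+2)$ is itself a $D_K$-pair --- but no decomposition $L\cong\sum_{n}(1+M_n+1+K)$ can satisfy $(*)$, since in any such decomposition the singleton interval consisting of a delimiting ``$1$'' between a written $K$-block and the next summand is isomorphic to $K$ and lies outside every written $K$-block. So your sentence is strictly weaker than the property to be characterized, and the claimed equivalence fails.

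The repair is exactly the extra exclusivity that the paper builds in: its sentence says, for every $n$, that there exist $a_0<b_0<\cdots<a_n<b_n<a_{n+1}$ with $a_0$ the least element, $D_K(b_i,a_{i+1})$ for each $i$, and \emph{no other} pair $x<y<a_{n+1}$ satisfying $D_K$; this ``no other $D_K$-pair'' clause is what rules out stray or overlapping maximal copies of $K$ and makes the block structure unique, and it is where $(*)$ is actually used in the forward direction. Alternatively, you could keep your conjuncts and add one more of the form $\forall u,v,u',v'\,\bigl(D_K(u,v)\wedge D_K(u',v')\wedge u<u'\to v\le u'\bigr)$, which is well within $\Pi_{\alpha+5}$; with pairwise disjointness in hand, your enumeration argument and the deduction of $(*)$ from (e) do go through.
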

	\begin{proof}
		$\varphi^{\mathbb{N}}_K$ says that for every $n$, there are elements $a_0 < b_0 < a_1 < b_1 < \cdots < a_n < b_n < a_{n+1}$ such that
		\begin{enumerate}
			\item $a_0$ is the minimal element,
			\item $D_K(b_i,a_{i+1})$  for each $i = 1,\ldots,n$, and
			\item for no other $x < y < a_{n+1}$ does $D_K(x,y)$ hold.
		\end{enumerate}
		Any $L \cong \sum_{n \in \mathbb{N}} (1 + M_n + 1 + K)$ with property ($*$) satisfies this sentence with $a_n$ and $b_n$ being the first and second $1$'s in the $n$th summand. The condition ($*$) means that (3) holds.
		
		On the other hand, if $\varphi^{\mathbb{N}}_K$ is true in a linear order $L$, then by (3) the choices of $a_0 < b_0 < a_1 < b_1 < \cdots$ are unique. Thus, letting $M_i = (b_i,a_i)$, we get
		\[ L \cong \sum_{n \in \mathbb{N}} (1 + M_n + 1 + K).\]
		Property (3) also yields ($*$).
	\end{proof}
	
	A similar argument works for $\mathbb{Z}$-sums as well. Here, we instead say that each pair $(x,y)$ satisfying $D_K(x,y)$ has a successor pair satisfying $D_K$ and a predecessor pair satisfying $D_K$, with no other pairs in between satisfying $D_K$.
	\begin{lemma}\label{lem:Z-sum}
		Let $SR(K) \leq \alpha$. Then there is a $\Pi_{\alpha + 5}$ sentence $\varphi^{\mathbb{Z}}_K$ such that a linear order $L$ satisfies $\varphi^{\mathbb{Z}}_K$ if and only if there are linear orders $M_n$ such that
		\[ L \cong \sum_{n \in \mathbb{Z}} (1 + M_n + 1 + K)\]
		and ($*$) any finite partial sum $1+M_{n}+1+K+1+M_{n+1}+1+K+\cdots+1+M_{n+k} + 1 + K$ has intervals isomorphic to $K$ only within the written $K$ blocks (or as the entire written $K$ block).
	\end{lemma}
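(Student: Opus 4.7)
The plan is to mimic the proof of \cref{lem:n-sum}, replacing the condition ``$a_0$ is the minimum element'' (which anchors the $\mathbb{N}$-indexing of the $D_K$-pairs) by conditions that force the $D_K$-pairs to be $\mathbb{Z}$-indexed instead. Specifically, I would take $\varphi^{\mathbb{Z}}_K$ to be the conjunction of three statements about $L$:
\begin{enumerate}[label=(\arabic*)]
	\item (Discreteness) Every $(x, y)$ with $D_K(x, y)$ has an immediate $D_K$-successor --- a pair $(x', y')$ with $y < x'$, $D_K(x', y')$, and no $D_K$-pair strictly between --- and, symmetrically, an immediate $D_K$-predecessor.
	\item (Covering) Every $z \in L$ either lies strictly inside some $D_K$-pair (i.e., $x < z < y$ for some $D_K(x, y)$) or lies in the closed interval $[y_1, x_2]$ cut out by two consecutive $D_K$-pairs $(x_1, y_1) < (x_2, y_2)$.
	\item (Finite distance) Between any two $D_K$-pairs lie only finitely many $D_K$-pairs, expressible as the infinitary disjunction $\bigvvee_{n \in \mathbb{N}}$ of ``exactly $n$ $D_K$-pairs lie strictly between.''
\end{enumerate}
Since $D_K$ is at worst d-$\Sigma_{\alpha + 2}$, each of (1)--(3) works out to complexity at most $\Pi_{\alpha + 5}$, as does their conjunction.

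For the forward direction, if $L \cong \sum_{n \in \mathbb{Z}}(1 + M_n + 1 + K)$ satisfies ($*$), then by ($*$) the $D_K$-pairs are precisely the $\mathbb{Z}$-indexed $K$-blocks $(b_n, a_{n+1})$. Conditions (1) and (3) are immediate from this $\mathbb{Z}$-indexing, and (2) holds because every element lies in some $[a_n, b_n]$ or $(b_n, a_{n+1})$.

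For the backward direction, suppose $L \models \varphi^{\mathbb{Z}}_K$. By (1), the set of $D_K$-pairs, linearly ordered by first coordinate, is a discrete order with no endpoints, hence a disjoint union of $\mathbb{Z}$-chains; by (3) there is only a single chain, so the $D_K$-pairs are $\{(x_n, y_n)\}_{n \in \mathbb{Z}}$ with $y_n < x_{n+1}$. Condition (2) then says every $z \in L$ lies either in some $(x_n, y_n) \cong K$ or in some $[y_n, x_{n+1}]$, yielding $L \cong \sum_{n \in \mathbb{Z}}(1 + M_n + 1 + K_n)$ with $M_n = (y_{n-1}, x_n)$ and $K_n = (x_n, y_n)$. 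Property ($*$) follows from the maximality built into $D_K$: any interval $\cong K$ in $L$ is either maximal (hence a $D_K$-pair, i.e., a written $K$-block) or contained in a maximal one (hence inside a written $K$-block).

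The main obstacle is ensuring the $D_K$-pairs form a single $\mathbb{Z}$-chain rather than an exotic discrete order such as $\mathbb{Z} + \mathbb{Z}$ or $\mathbb{Z} \cdot \eta$. Since a $\mathbb{Z}$-sum has neither a first nor a last element, two distinct $\mathbb{Z}$-sum regions can abut in $L$ without leaving any element uncovered, so (1) and (2) alone do not suffice to pin down a single chain. Condition (3) is precisely what rules out such behavior, since pairs in different chains (or at ``infinite distance'' from each other) would have infinitely many $D_K$-pairs between them.
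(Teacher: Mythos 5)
Your construction follows the paper's route for this lemma: the paper's own treatment is a one-sentence remark that one should replace the minimum-element anchor of Lemma~\ref{lem:n-sum} by the requirement that every $D_K$-pair have an immediate $D_K$-successor and $D_K$-predecessor, and your condition (1) is exactly that. Your conditions (2) and (3) are a sensible completion of this remark: as you observe, the successor/predecessor condition alone does not force the $D_K$-pairs to form a single $\zeta$-chain whose blocks and gaps cover all of $L$, so something like your covering clause and the infinitary ``only finitely many pairs in between'' clause is needed, and your complexity count ($\Pi_{\alpha+5}$, using that $D_K$ is at worst d-$\Sigma_{\alpha+2}$) is correct.

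The step to flag is the last one. The claim that every interval isomorphic to $K$ is either maximal or contained in a maximal copy is false in general, since an increasing union of copies of $K$ need not be a copy of $K$. Concretely, take $K=\omega$ and $L=\sum_{n\in\mathbb{Z}}(1+\zeta+1+\omega)$: the only $D_K$-pairs of $L$ are the pairs of $1$'s bounding the written $\omega$-blocks, so $L$ satisfies your (1)--(3); yet every final segment of a $\zeta$-block, cut off by the delimiter that follows it, is an interval isomorphic to $\omega$ contained in no written $K$-block and in no maximal copy of $K$, so ($*$), read literally, fails for this decomposition (and one checks that no other choice of the $M_n$ restores it). Thus your sentence, like the one sketched in the paper, controls only the maximal copies of $K$, and what your argument actually delivers is the version of ($*$) asserting that every $D_K$-pair is a written $K$-block. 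The paper's own proof of Lemma~\ref{lem:n-sum} makes the identical identification when it asserts that its condition (3) ``also yields ($*$)'', so you are at parity with the paper here; but as a free-standing argument the Zorn-style justification should be dropped, and if one wants ($*$) in its literal form one should add to $\varphi^{\mathbb{Z}}_K$ the further conjunct ``every interval isomorphic to $K$ is contained in some $D_K$-pair'', which is still of complexity below $\Pi_{\alpha+5}$.
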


	\begin{lemma}\label{lem:def-pieces}
		Let $pSR(K) \leq \alpha$ and let
		\[ L \cong \sum_{n \in \mathbb{N}} (1 + M_n + 1 + K)\]
		be a linear order satisfying the property ($*$) of Lemma \ref{lem:n-sum}.
		For each $n$, there is a $\Sigma_{\alpha + 4}$ formula $\psi_{n,K}(x)$ defining the subset $M_n$.
	\end{lemma}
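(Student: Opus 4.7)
My plan is to define $\psi_{n,K}(x)$ by asserting the existence of ``delimiter'' elements $a_0 < b_0 < a_1 < b_1 < \cdots < a_n < b_n < a_{n+1}$ that pin down exactly the first $n+1$ copies of $K$ in the decomposition, with $x$ sitting between the last pair $a_n$ and $b_n$. Concretely, $\psi_{n,K}(x)$ asserts that there exist $a_0 < b_0 < \cdots < a_n < b_n < a_{n+1}$ such that: (i) $a_0$ is the minimum element of $L$; (ii) $D_K(b_i,a_{i+1})$ for each $i=0,\ldots,n$; (iii) for no pair $(b',a')$ with $a_0 < b' < a' \le a_{n+1}$ and $(b',a') \neq (b_i,a_{i+1})$ for any $i$ does $D_K(b',a')$ hold; and (iv) $a_n < x < b_n$.

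To verify correctness, given $x\in M_n$ I would take $a_i,b_i$ to be the first and second ``$1$''s bounding $M_i$ (for $i=0,\ldots,n$) and $a_{n+1}$ the first ``$1$'' bounding $M_{n+1}$; conditions (i), (ii), and (iv) are then immediate, and (iii) follows from $(*)$ applied to the finite partial sum $1+M_0+1+K+\cdots+1+M_n+1+K$, combined with the maximality built into $D_K$. Conversely, if $\psi_{n,K}(x)$ holds with some witnesses, then $(*)$ (used as in the proof of Lemma \ref{lem:n-sum}, since any $(b',a')\cong K$ in $L$ is contained in some finite initial partial sum) forces each $(b_i,a_{i+1})$ to coincide with one of the written $K$-blocks; clause (i) fixes $a_0$ as the leftmost ``$1$'', and clause (iii) then forces the pairs $(b_0,a_1),\ldots,(b_n,a_{n+1})$ to be exactly the first $n+1$ written $K$-blocks, so that $(a_n,b_n)=M_n$ and (iv) gives $x\in M_n$.

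For the complexity count, since $pSR(K)\le \alpha$ we have that $K$ has a $\Sigma_{\alpha+2}$ Scott sentence, so $(x,y)\cong K$ is $\Sigma_{\alpha+2}$ and hence $D_K$ is $\text{d-}\Sigma_{\alpha+2}$. Clause (i) is $\Pi_1$, clause (ii) is a finite conjunction of $\text{d-}\Sigma_{\alpha+2}$ formulas and hence $\text{d-}\Sigma_{\alpha+2}$, clause (iv) is quantifier-free, and clause (iii) is a universal quantifier over an implication whose conclusion is $\neg D_K$ (i.e., $\text{d-}\Pi_{\alpha+2}$), giving $\Pi_{\alpha+3}$. The conjunction of these is $\Pi_{\alpha+3}$, and prefixing the existential block over $a_0,\ldots,a_{n+1}$ yields $\Sigma_{\alpha+4}$.

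The main obstacle is clause (iii): without it, the witnesses $(b_i,a_{i+1})$ could be any subsequence of $K$-blocks in $L$ rather than the first $n+1$, so $\psi_{n,K}$ would be satisfied by every element in some $M_m$ with $m\ge n$ rather than exactly by $M_n$. It is the interplay of (i), (iii), and $(*)$ that forces the witnesses to enumerate the leftmost $n+1$ delimiters in order, and this is precisely what bumps the complexity from $\Sigma_{\alpha+3}$ up to $\Sigma_{\alpha+4}$.
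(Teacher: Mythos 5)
Your proposal is correct and follows essentially the same route as the paper: the paper's proof simply writes down the same formula $\psi_{n,K}(x)$ (existence of delimiters $a_0 < b_0 < \cdots < a_n < b_n < a_{n+1}$ with $a_0$ minimal, $D_K(b_i,a_{i+1})$, no other pairs below $a_{n+1}$ satisfying $D_K$, and $a_n < x < b_n$). Your added verification via $(*)$ and the complexity count yielding $\Sigma_{\alpha+4}$ are consistent with what the paper leaves implicit.
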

	\begin{proof}
		$M_n$ is definable by the formula $\psi_{n,K}(x)$ which says that there are elements $a_0 < b_0 < a_1 < b_1 < \cdots < a_n < b_n < a_{n+1}$ such that
		\begin{enumerate}
			\item $a_0$ is the minimal element,
			\item $D_K(b_i,a_{i+1})$ for each $i = 1,\ldots,n$,
			\item for no other $x,y < a_{n+1}$ does $D_K(x,y)$ hold, and
			\item $a_n < x < b_n$.\qedhere
		\end{enumerate}
	\end{proof}
	
	\section{Verification of the Construction}
	
	In this section, we prove the following theorem. In particular, we obtain Theorem \ref{thm:main} which says that for any limit ordinal $\lambda$ there is a linear ordering $L_\lambda$ such that $SSC(L_\lambda)=\Sigma_{\lambda+1}$. Moreover, one can see from the construction that if $\lambda$ is computable and the $\lambda$-mixable pair is uniformly computable then $L_\lambda$ can be taken to be computable as well.
	
	\begin{theorem}\label{thm:sc-Ll}
		Given a $\lambda$-mixable pair $(\{L_i\}_{i\in\omega}, K)$ and an almost periodic function $f$, let $L_\lambda$ be the $\mathbb{Z}$-sum. If $f$ takes on only finite values or there is a unique limit structure $L_\infty$, then $L_\lambda$ has Scott complexity $\Sigma_{\lambda+1}$.
	\end{theorem}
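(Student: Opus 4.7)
I will establish the result by constructing matching upper and lower bounds.

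\textbf{Upper bound: $SSC(L_\lambda) \leq \Sigma_{\lambda+1}$.} I build a Scott sentence of the form $\exists x_0 \exists y_0 \, \psi(x_0,y_0)$, where $(x_0,y_0)$ will be forced to be a delimiter pair satisfying $D_K(x_0,y_0)$ that singles out ``position $0$'' of the $\Z$-indexing. The body $\psi$, which will be $\Pi_\lambda$, asserts: (i) $L$ is a $\Z$-sum with delimiters $K$, by conjoining the $\Pi_{\alpha+5}$ sentence $\varphi^{\Z}_K$ of Lemma \ref{lem:Z-sum} (with $\alpha = pSR(K) < \lambda$); and (ii) for each $n \in \Z$, the $n$-th piece relative to $(x_0,y_0)$ --- definable by iterating the $D_K$-successor and predecessor relations $n$ times, via a $\Z$-sum analogue of Lemma \ref{lem:def-pieces} --- satisfies a Scott sentence for $L_{f(n)}$ when $f(n) < \infty$, or for $L_\infty$ when $f(n) = \infty$. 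In the finite-values case each such Scott sentence has complexity $< \lambda$, so the countable conjunction in (ii) is $\Pi_\lambda$; in the unique-limit case, $L_\infty$ has a $\Pi_\lambda$ Scott sentence (by the characterization lemma in Section 2), and the conjunction is again $\Pi_\lambda$. Hence $\psi$ is $\Pi_\lambda$ and the full sentence is $\Sigma_{\lambda+1}$.

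\textbf{Lower bound: $SSC(L_\lambda) \geq \Sigma_{\lambda+1}$.} It suffices to produce a structure $M$ with $M \equiv_\lambda L_\lambda$ and $M \not\cong L_\lambda$: such an $M$ simultaneously rules out a $\Pi_\lambda$ Scott sentence (which would force every $\equiv_\lambda$-member to be isomorphic to $L_\lambda$) and any Scott sentence of complexity below $\lambda$ (which would be determined by $\equiv_\alpha$ for some $\alpha < \lambda$, while $M \equiv_\alpha L_\lambda$ for all $\alpha < \lambda$). To construct $M$, let $p_n$ be the period of $\min(\delta_n, f)$, passing to a subsequence so that $p_n \mid p_{n+1}$. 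Choose a coherent sequence of shifts $s_n \in \Z/p_n\Z$ (that is, $s_{n+1} \equiv s_n \pmod{p_n}$) such that no integer $t$ satisfies $t \equiv s_n \pmod{p_n}$ for all sufficiently large $n$; this is directly analogous to the non-integer $p$-adic shift example from Section 2, and the unboundedness of $p_n$ (guaranteed by the non-periodicity of $f$) leaves room for such a choice. Define $g \colon \Z \to \mathbb{N} \cup \{\infty\}$ by $g(k) := \sup_n \min(\delta_n, f(k - s_n))$; coherence makes the sequence non-decreasing and the supremum well-defined, and yields $\min(\delta_n, g) = \min(\delta_n, f(\, \cdot \, - s_n))$, showing $g$ is almost periodic and not a $\Z$-shift of $f$. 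Let $M$ be the $(g,\Z)$-sum (using the same limit-structure choices where $g$ is infinite, when relevant). A back-and-forth argument using $L_i \equiv_{\delta_i} L_{i+1}$ together with criterion (5) of Definition \ref{defn:l-mixable} and Lemma \ref{lem:inherit} yields $M \equiv_{\delta_n} L_\lambda$ for every $n$, hence $M \equiv_\lambda L_\lambda$. On the other hand, the delimiters $K$ canonically recover the sequence of pieces up to a $\Z$-shift, so any isomorphism $M \cong L_\lambda$ would produce an integer $t$ realizing a shift from $g$ to $f$, contradicting our choice of $s_n$.

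\textbf{Main obstacle.} The central difficulty is the lower bound construction of $M$: the choice of a coherent sequence of shifts that fails to converge to any integer is what makes the almost-periodic-but-not-periodic hypothesis indispensable. Verifying $M \equiv_\lambda L_\lambda$ reduces to a level-by-level back-and-forth where, at stage $\delta_n$, one must match up the pieces of $M$ and $L_\lambda$; since the $L_i$ are related only by $\equiv_{\delta_i}$-equivalence rather than isomorphism, exploiting criterion (5) (so that the delimiters $K$ on both sides sit in the correct positions) is essential. The upper bound is more routine once the $\Z$-analogue of Lemma \ref{lem:def-pieces} is set up; its main subtlety is absorbing the infinite-valued positions via the $\Pi_\lambda$ Scott sentence of the unique limit structure $L_\infty$.
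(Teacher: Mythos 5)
Your proof is correct, and the upper bound is essentially the paper's (the paper adds the single parameter $a_0$ and splits $L_\lambda$ into two $\mathbb{N}$-sums handled by Lemmas \ref{lem:n-sum} and \ref{lem:def-pieces}, which is the same device as your $\exists x_0\exists y_0$ with $\varphi^{\mathbb{Z}}_K$). Your lower bound, however, is genuinely different. The paper argues directly that $L_\lambda$ has no $\Pi_{\lambda+1}$ Scott sentence: by Montalb\'an's robustness theorem it suffices to show the orbit of $a_0$ is not $\Sigma_\lambda$-definable, and this follows because $a_0\equiv_{\delta_i}a_k$ whenever $k$ is a multiple of the period of $\Res{f}{i}$, while non-periodicity of $f$ keeps $a_k$ out of the orbit of $a_0$ --- a two-paragraph argument using only the truncations of the single function $f$. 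You instead manufacture a second model $M\equiv_\lambda L_\lambda$ with $M\not\cong L_\lambda$ via a coherent-shift (inverse-limit of $\mathbb{Z}/p_n\mathbb{Z}$) construction of some $g\Efin f$ with $g$ not $\EZ$-equivalent to $f$, and then observe that every candidate Scott sentence of complexity properly below $\Sigma_{\lambda+1}$ (i.e.\ $\Pi_\lambda$, $\Sigma_\lambda$, $\dSinf{\lambda}$, or anything of complexity $<\lambda$) is $\equiv_\lambda$-invariant, so the $\Sigma_{\lambda+1}$ upper bound forces $SSC(L_\lambda)=\Sigma_{\lambda+1}$. This costs more work but buys more: it generalizes the paper's $p$-adic examples of Section \ref{sec:many} from the functions $f_t$ to an arbitrary almost periodic $f$, and it already yields (a weak form of) Theorem \ref{thm:main2}-type phenomena as a by-product, whereas the paper's orbit argument is shorter and rules out $\Pi_{\lambda+1}$ directly without invoking the Wadge-containment bookkeeping. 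Two small repairs you should make: the period you intend is that of $\Res{f}{n}=\min(n,f)$, not $\min(\delta_n,f)$; and in the non-isomorphism step your shift $g$ may acquire infinite values even when $f$ has none, so at such positions you must additionally note that a limit structure is never isomorphic to any $L_i$ (which follows since the $L_i$ are pairwise non-isomorphic by conditions (3) and (4)), so that the induced shift still contradicts your choice of the $s_n$.
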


	\begin{proof}		
		We fix the following notation: In $L_\lambda$, the first $1$ in the $n$th summand $1+ L_{f(n)} +1+K$ will be $a_n$, and the second $1$ in the $n$th summand will be $b_n$.
		
		\begin{claim}\label{claim:lower-bound}
			$L_\lambda$ does not have a $\Pi_{\lambda + 1}$ Scott sentence.
		\end{claim}
		\begin{proof}
			We show that $a_0$ does not have a $\Sigma_{\lambda}$ definable orbit. This is enough by the robustness theorem of Montalb\'an in \cite{Mo15}. Since $\lambda$ is a limit ordinal, this is equivalent to saying that for all $i$, $a_0$ does not have a $\Sigma_{\delta_i}$ definable orbit.
			In particular, we claim if $k$ is a multiple of the period of $\Res{f}{i}$, then $a_0 \equiv_{\delta_i} a_k$.
			Fix any $i$ and a corresponding $k$. By Lemma \ref{lem:bfandpartitions}, it suffices to show that $(-\infty,a_0) \equiv_{\delta_i} (-\infty,a_k)$ and $(a_0,\infty) \equiv_{\delta_i} (a_k,\infty)$.
			For the latter, we have
			$$(a_0,\infty) = L_{f(0)} + 1 + K + \sum_{n > 0}  (1+L_{f(n)}+1+K)$$
			and
			$$(a_k,\infty) = L_{f(k)} + 1 + K + \sum_{n > 0}  (1+L_{f(k+n)}+1+K).$$
			Since $k$ is a multiple of the period of $\Res{f}{i}$, for each $n$, $\Res{f}{i}(n)= \Res{f}{i}(k+n)$ and so $L_{f(n)} \equiv_{\delta_i} L_{f(k+n)}$.
			By playing the back and forth game on each corresponding summand, we get that $(a_0,\infty) \equiv_{\delta_i} (a_k,\infty)$. Symmetrically, we can argue that $(-\infty,a_0) \equiv_{\delta_i} (-\infty,a_k)$.
		\end{proof}
		
		\begin{claim}\label{claim:upper-bound}
			$L_\lambda$ has a $\Sigma_{\lambda+1}$ Scott sentence. 
		\end{claim}
		\begin{proof}
			We show that after adding the parameter $a_0$ to $L_\lambda$ it has a $\Pi_{\lambda}$ Scott sentence. The parameter cuts $L_\lambda$ into two intervals $(-\infty,a_0]$ and $[a_0,\infty)$, and it suffices to show that each of these has a $\Pi_\lambda$ Scott sentence. We consider only the latter, with a similar argument working for the former.
			
			Note that 
			\[ [a_0,\infty) = \sum_{n \geq 0}  (1+L_{f(n)}+1+K) \]
			A $\Pi_{\lambda}$ Scott sentence for this linear order is given as follows:
			\begin{enumerate}
				\item The sentence $\varphi^{\mathbb{N}}_K$ from Lemma \ref{lem:n-sum}, and
				\item for each $n$, the subset defined by the formula $\psi_{n,K}(x)$ from Lemma \ref{lem:def-pieces} is isomorphic to $L_{f(n)}$, as expressed by the Scott sentence of $L_{f(n)}$ (which is $\Pi_\lambda$ as either $f$ has no infinite values, or there is a unique limit structure $L_\infty$).
			\end{enumerate}
			
			Note that relativizing each of the quantifiers in the $\Pi_\lambda$ Scott sentence of $L_{f(n)}$ to the formula $\psi_{n,K}(x)$ yields a $\Pi_{\alpha+5+\lambda}=\Pi_{\lambda}$ sentence, as claimed.
			It is easy to see that $[a_0,\infty)$ satisfies this sentence.
			
			On the other hand, suppose that we have a linear order $M$ that satisfies these sentences. Then, by Lemma \ref{lem:n-sum},
			\[ M \cong \sum_{n \in \mathbb{N}} (1 + M_n + 1 + K)\]
			and by Lemma \ref{lem:def-pieces} each $M_n$ is the subset defined by $\psi_{n,K}(x)$ and hence, as it satisfies the Scott sentence of $L_{f(n)}$, is isomorphic to $L_{f(n)}$. Thus $M \cong [a_0,\infty)$.
		\end{proof}
		\renewcommand{\qedsymbol}{}
	\end{proof}
	
	The reader might wonder what happens if $f$ takes on at least one value $\infty$, and $(\{L_i\}_{i\in\omega}, K)$ does not have a unique limit structure, but nevertheless we fix some particular limit structure $L_\infty$ with respect to which we take the $(f,\Z)$-sum. Then (if $f$ has finitely many infinite values):
	\begin{itemize}
		\item If $SSC(L_\infty)=\Pi_{\lambda},\Sigma_{\lambda+1}$, then $SSC(L_{\lambda})=\Sigma_{\lambda+1}$.
		\item If $SSC(L_\infty)=\Pi_{\lambda+1},\text{d-}\Sigma_{\lambda+1}$, then $SSC(L_{\lambda})=\text{d-}\Sigma_{\lambda+1}$.
		\item Otherwise, $SSC(L_\lambda)=SSC(L_{\infty})$.
	\end{itemize}
	We omit the proofs since they are of a similar style to Theorem \ref{thm:sc-Ll}.
	
	The next two corollaries, stated previously in the introduction, are obtained by applying Theorem \ref{thm:sc-Ll} to particular $\lambda$-mixable pairs. First, recall that a structure is called \emph{rigid} if its automorphism group is trivial. 
	
	\rigid* 
	
	
	\begin{proof}
		Let $i\colon\mathbb{N}\to\eta$ be a bijective enumeration. Take 
		$$K=\sum_{q\in\eta} i^{-1}(q),$$
		where $i^{-1}(q)$ is the unique linear ordering with that many elements.
		Let $L_i:=\omega^{\delta_i}$.
		It is routine to check that these orderings are rigid and produce a $\lambda$-mixable pair.
		
		Construct $L_\lambda$ using this $\lambda$-mixable pair and any almost periodic function $f$:
		\[L_\lambda = \sum_{n\in\mathbb{Z}} (1+ L_{f(n)} +1+K).\]
		Any automorphism of $f$ must fix each written $K$ setwise, and hence also each $L_{f(n)}$. But these are all rigid, and so the only automorphism is the identity.
	\end{proof}
	
	Recall that a linear ordering is called \emph{scattered} if it does not accept an embedding from $\eta$.
	
	\scattered*
	
	\begin{proof}
		Take $K=\zeta$ and let  $L_i:=\omega^{\delta_i}$.
		It is routine to check that these orderings have the desired properties and that any associated $\mathbb{Z}$-sum is scattered.
	\end{proof}
	
	This is the first known case of a scattered linear ordering with a $\Sigma$ level Scott complexity.
	
	\begin{question}
		Are there scattered linear orderings with $SSC(L)=\Sigma_{\alpha}$ for $\alpha\neq\lambda+1$?
	\end{question}
	
	Alvir, Greenbereg, Harrison-Trainor, and Turetsky \cite{AGHT} showed that no structures have $\Sigma_{2}$ Scott complexity.
	Gonzalez and Rossegger \cite{RosseggerGonzalez} showed that no linear ordering has $\Sigma_{3}$ Scott complexity.
	We show that no scattered linear orderings have $\Sigma_{4}$ Scott complexity, though we delay the proof until the end of the paper since it would be a digression from the main techniques of this paper.

	\begin{proposition}\label{prop:no-scatter-sigma4}
		There are no scattered linear orderings with Scott complexity $\Sigma_{4}$. 
	\end{proposition}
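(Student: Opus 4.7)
Suppose for contradiction that $L$ is a scattered linear order with $SSC(L) = \Sigma_4$. By the invariant table (with $\alpha = 2$), this forces $pSR(L) = 2$ and $uSR(L) = 4$, so there exists a tuple $\bar{a} \in L$ such that $(L, \bar{a})$ has a $\Pi_3$ Scott sentence, while $L$ itself admits no $\Pi_4$ Scott sentence. The plan is to derive a contradiction by producing a $\Pi_4$ Scott sentence for $L$; by Montalb\'an's robustness theorem \cite{Mo15} this is equivalent to showing that every automorphism orbit of $L$ is $\Sigma_3$-definable.

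The structural heart of the argument is the following lemma, which I would prove by a back-and-forth analysis in the spirit of \cite{As86} and the Scott-complexity computations of \cite{RosseggerGonzalez}: \emph{if $M$ is a scattered linear order with a $\Pi_3$ Scott sentence, then the finite condensation of $M$ (where $x \sim y$ iff only finitely many elements lie between $x$ and $y$) is finite.} Note that in any scattered order every $\sim$-block is order-isomorphic to one of $\{k : k < \omega\} \cup \{\omega, \omega^*, \zeta\}$, so the lemma amounts to saying $M$ is a finite sum of such simple pieces.

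Granting this lemma, each interval of $L$ determined by $\bar{a}$ is scattered and, by \Cref{lem:bfandpartitions}, inherits a $\Pi_3$ Scott sentence; hence each such interval is a finite sum of blocks from $\{k, \omega, \omega^*, \zeta\}$. Reassembling the intervals together with the entries of $\bar{a}$, the whole of $L$ is a finite sum $L = L_1 + L_2 + \cdots + L_n$ with each $L_i \in \{k, \omega, \omega^*, \zeta\}$. The automorphism group of such an $L$ is the direct product of the $\mathrm{Aut}(L_i)$, which is trivial on each rigid summand and $\mathbb{Z}$ (acting by shift) on each $\zeta$-summand. The orbit of any tuple $\bar{c}$ is therefore determined by specifying, for each entry $c_j$, the summand containing it together with either its exact position (in a rigid summand) or its relative position among the other $c_k$'s in the same $\zeta$-summand. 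I would then show each of these data can be defined by a $\Sigma_3$ formula: the global minimum (if any) is $\Pi_1$, boundary elements of successive $\omega$- or $\omega^*$-blocks are pinned down by $\Pi_2$ formulas such as ``has infinitely many predecessors but no immediate predecessor'', successor chains from these basepoints identify the remaining rigidly-placed elements, and $\zeta$-block membership is captured by the $\Sigma_2$ finite-condensation relation. Consequently every orbit of $L$ is $\Sigma_3$-definable, yielding the desired $\Pi_4$ Scott sentence and the contradiction.

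The main obstacle is proving the Key Lemma itself. The intended strategy is to argue by contradiction: if $M$ is scattered with a $\Pi_3$ Scott sentence and yet its finite condensation is infinite, one would build a non-isomorphic $M' \equiv_3 M$, violating that the $\Pi_3$ Scott sentence characterizes $M$ up to isomorphism. Such a perturbation is always available because any infinite block-sequence admits modifications---for instance, stretching or truncating the ``tail'' of blocks, or relocating a finite block between adjacent infinite ones---that are invisible to the $\Pi_3$ back-and-forth relation. This is where the real case analysis lives, and it parallels (and ultimately reduces to) the detailed Scott-complexity calculations for linear orders in \cite{RosseggerGonzalez}; this is the reason the authors defer the proof to the end of the paper.
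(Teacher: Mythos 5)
Your skeleton matches the paper's: use $SSC(L)=\Sigma_4$ to get parameters $\bar a$ with $(L,\bar a)$ admitting a $\Pi_3$ Scott sentence, deduce via Lemma \ref{lem:bfandpartitions} that each interval is a scattered order with a $\Pi_3$ Scott sentence, conclude that $L$ is a finite sum of blocks from $\{\mathbf{k},\omega,\omega^*,\zeta\}$, and then exploit the very limited automorphisms of such a sum to produce a Scott sentence that is too simple. The endgame differs in a harmless way: you show every orbit is $\Sigma_3$-definable to get a $\Pi_4$ Scott sentence (contradicting $uSR(L)=4$), whereas the paper picks one canonical tuple meeting each $\sim_1$-class, shows its orbit is $\Pi_2$-definable, and gets the sharper conclusion of a d-$\Sigma_3$ Scott sentence; both contradict Scott complexity $\Sigma_4$. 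One small repair in your orbit analysis: the claim that $\mathrm{Aut}(L)$ is the product of the automorphism groups of the summands is only valid when the summands are the $\sim_1$-classes of $L$ itself (the finite condensation), not the ad hoc pieces obtained by reinserting the entries of $\bar a$ (e.g.\ an $\omega^*$-interval, a parameter, and an $\omega$-interval merge into a single $\zeta$-class); since the finite condensation of $L$ is still finite, passing to it fixes this.

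The genuine soft spot is your Key Lemma. The statement (scattered with a $\Pi_3$ Scott sentence implies finite finite-condensation) is true and does suffice for your decomposition, but your proposed proof is only the assertion that a non-isomorphic $\equiv_3$-equivalent perturbation is ``always available'' whenever the block sequence is infinite; that availability claim is exactly the hard case analysis, and verifying $\equiv_3$ for such perturbations (stretching tails, relocating finite blocks) is a nontrivial back-and-forth argument that you do not carry out. The paper sidesteps this entirely by quoting the classification from \cite{RosseggerGonzalez}: the scattered linear orders with a $\Pi_3$ Scott sentence are exactly the finite orders, $\omega$, $\omega^*$, $\zeta$, and $\omega+\omega^*$, from which your Key Lemma (indeed a sharper statement) is immediate. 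Since you already acknowledge that your lemma ``ultimately reduces to'' those calculations, the clean fix is to cite that classification outright rather than to reprove it by perturbations; with that substitution your argument goes through.
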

	
	\section{Families of non-isomorphic $\lambda$-equivalent structures}\label{sec:many}
	
	In order to prove more sophisticated results about linear orderings with Scott complexity $\Sigma_{\lambda+1}$, we must delve further into the almost periodic functions that we use to construct the orderings.
	In this section we study the combinatorial structure of almost periodic functions.
	We also explain how to transfer facts about this combinatorial structure to facts about the Scott complexity of the associated linear orderings.
	
	We are generally dealing with multiple almost periodic functions at once in this section.
	For the sake of clarity, given a almost periodic function $f$, we will let $L_f$ denote the $\Z$-sum that corresponds to a given $\lambda$-mixable pair (that should be clear from context).

	We define two important equivalence relations among the almost periodic functions.
	We also demonstrate that these equivalence relations correspond to $\cong$ and $\equiv_\lambda$ in the corresponding linear orderings.
	
	\begin{definition}
		Given two almost periodic functions $f,g$ we write $f \EZ g$ to indicate that there is an $n\in\Z$ such that for all $m\in \Z$, we have $f(m)=g(m+n)$.
	\end{definition}
	
	\begin{proposition}\label{IsomorphismChar}
		For any fixed $\lambda$-mixable pair, $L_f\cong L_g$ if and only if $f \EZ g$.
	\end{proposition}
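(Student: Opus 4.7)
The plan is to prove the two directions separately, with the forward direction being essentially a reindexing and the reverse direction using the delimiter structure enforced by condition (5) of $\lambda$-mixability.

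For the direction $f \EZ g \Rightarrow L_f \cong L_g$: Suppose $f(m) = g(m+n)$ for all $m \in \Z$. Then
\[
L_f = \sum_{m \in \Z} (1+L_{f(m)}+1+K) = \sum_{m \in \Z} (1+L_{g(m+n)}+1+K),
\]
and the substitution $k = m+n$ gives $L_g$ on the nose. This reindexing is an order-isomorphism of $\Z$-sums. If we are in the unique-limit-structure case, every infinite-valued position uses the same $L_\infty$, so the reindexing causes no ambiguity about which limit structure is placed where.

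For the direction $L_f \cong L_g \Rightarrow f \EZ g$, the idea is to read off the shift $n$ from how an isomorphism acts on the $K$ delimiters. Using the formula $D_K(x,y)$ that picks out maximal copies of $K$, together with condition (5) of Definition \ref{defn:l-mixable} and its extension to limit structures via Lemma \ref{lem:inherit}, I would argue that in $L_f$ the pairs $(x,y)$ satisfying $D_K$ are precisely the pairs $(b_m, a_{m+1})$ bounding the written $K$ blocks (and likewise in $L_g$). A given pair lies in a finite subsum of the alternating form described in (5), and Lemma \ref{lem:inherit} handles any limit-valued positions inside that subsum, so no spurious copies of $K$ arise. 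Any isomorphism $\phi \colon L_f \to L_g$ thus induces an order-preserving bijection between the two $\Z$-indexed sets of $K$-blocks, which must be a shift $m \mapsto m+n$ for some $n \in \Z$. Consequently $\phi$ restricts to an isomorphism of the $L$-blocks $(a_{m+1}, b_{m+1}) \cong (a'_{m+n+1}, b'_{m+n+1})$, giving $L^{(m)}_{f(m)} \cong L^{(m+n)}_{g(m+n)}$ for every $m$.

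It then remains to deduce $f(m) = g(m+n)$ from these block-wise isomorphisms. For this I need the $L_i$ to be pairwise non-isomorphic and distinct from any limit structure. The former follows from conditions (3) and (4) of a $\lambda$-mixable pair together with the monotonicity of the fundamental sequence: for $i < j$, chaining (3) yields $L_{i+1} \equiv_{\delta_{i+1}} L_j$, while (4) gives $L_i \not\equiv_{\delta_{i+1}} L_{i+1}$, so $L_i \not\equiv_{\delta_{i+1}} L_j$ and in particular $L_i \not\cong L_j$. The latter follows because any limit structure $L_\infty$ has $SR(L_\infty) = \lambda$ (as observed in the paragraph characterizing unique limit structures), whereas each $L_i$ has $SR(L_i) < \lambda$.

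The main obstacle is the careful verification in the reverse direction that the $K$-block structure is rigid, and in particular that Lemma \ref{lem:inherit} is strong enough to rule out spurious copies of $K$ even when some positions are occupied by limit structures; after this the rest of the argument is routine order-theoretic bookkeeping together with the non-isomorphism facts from the previous paragraph.
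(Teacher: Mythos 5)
Your proof is correct and follows essentially the same route as the paper's: the forward direction by reindexing, and the reverse direction by using $D_K$ (via condition (5) and Lemma \ref{lem:inherit}) to see that any isomorphism sends delimiter pairs to delimiter pairs, hence acts as a shift on the $\Z$-indexed blocks and restricts to blockwise isomorphisms. The only difference is that you explicitly verify the facts the paper simply asserts, namely that the $L_i$ are pairwise non-isomorphic and not isomorphic to any limit structure, which is a welcome (and correct) addition.
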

	
	\begin{proof}
		Assume that $f \EZ g$.
		In other words,  there is an $n\in\Z$ such that for all $m\in \Z$, $f(m)=g(m+n)$.
		Define a map from $L_f$ to $L_g$ summand by summand, sending the $m^{th}$ summand to the $(m+n)^{th}$ summand isomorphically.
		It is straightforward to confirm that this is an isomorphism.
		
		Now say that $L_f\cong L_g$ via the isomorphism $\varphi$.
		Note that $\varphi$ must preserve relation $D_K$ on pairs, and so must send every $K$ block in $L_f$ to a $K$ block in $L_g$.
		As the $K$ blocks have order type $\zeta$, the only way to do this is to, for some fixed $n\in\Z$, send the $m^{th}$ $K$ block in $L_f$ to the $(m+n)^{th}$ $K$ block in $L_g$.
		This means that $\varphi$ must restrict to an isomorphism $L_{f(m)}\cong L_{g(m+n)}$.
		As each element of the $L_i$ has a unique isomorphism type, this means that $f(m)=g(m+n)$, as desired.
	\end{proof}
	
	The other notion of equivalence we will consider is the coarser equivalence relation $\Efin$ defined as follows.
	
	\begin{definition}
		Given functions $f,g \colon \mathbb{Z}\to \mathbb{N} \cup \{\infty\}$ we write $f \Efin g$ to indicate that for all $\ell$ we have $\Res{f}{\ell} \EZ \Res{g}{\ell}$. 
	\end{definition}
	
	\noindent It is immediate that if $f$ is almost periodic and $f \Efin g$ then $g$ is also almost periodic.
	
	For almost periodic functions taking only finite values, there is also another characterization of $\Efin$. A \textit{finite successive segment} of $f$ is a sequence of consecutive values of $f$, e.g.,  $(f(n),f(n+1),\ldots,f(n+i))$ for some $n,i$.
	
	\begin{proposition}
		If $f$ and $g$ both take only finite values, then $f \Efin g$ if and only if $f$ and $g$ have the same finite successive segments.
	\end{proposition}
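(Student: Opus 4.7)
The plan is to prove each direction in turn, using periodicity of $\Res{f}{\ell}$ and $\Res{g}{\ell}$ as the central tool. Throughout, write ``the same finite successive segments'' to mean that every finite successive segment of $f$ occurs as a finite successive segment of $g$, and vice versa.

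For the forward direction, suppose $f \Efin g$ and let $(f(n), f(n+1), \ldots, f(n+i))$ be any finite successive segment of $f$. Since $f$ takes only finite values, I can choose $\ell$ strictly greater than $\max_{0 \leq j \leq i} f(n+j)$, so that $\Res{f}{\ell}$ agrees with $f$ on $\{n, n+1, \ldots, n+i\}$. By hypothesis there is a shift $k$ with $\Res{f}{\ell}(m) = \Res{g}{\ell}(m+k)$ for every $m \in \Z$. Applying this on $\{n, \ldots, n+i\}$ and using that $\min(\ell, g(m+k)) = f(m) < \ell$ forces $g(m+k) = f(m)$, so the original segment reappears in $g$ starting at position $n+k$. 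The symmetric argument gives segments of $g$ in $f$.

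For the converse, suppose $f$ and $g$ share the same finite successive segments; fix $\ell \in \mathbb{N}$ and let $P, Q$ be periods of $\Res{f}{\ell}$ and $\Res{g}{\ell}$ respectively. Choose any $N \geq P + Q$ and consider the finite successive segment $(f(0), f(1), \ldots, f(N-1))$. By hypothesis there is $k \in \Z$ such that $g(k+j) = f(j)$ for all $0 \leq j < N$. Applying $\min(\ell, \cdot)$ to both sides gives $\Res{g}{\ell}(k+j) = \Res{f}{\ell}(j)$ on the window $[0, N-1]$. Since $\Res{f}{\ell}$ and the shifted function $m \mapsto \Res{g}{\ell}(m+k)$ are both periodic with periods dividing $\mathrm{lcm}(P, Q) \leq PQ \leq N$, agreeing on any window of length $\geq \mathrm{lcm}(P,Q)$ forces agreement on all of $\Z$. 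Hence $\Res{f}{\ell} \EZ \Res{g}{\ell}$. Since $\ell$ was arbitrary, $f \Efin g$.

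There are no real obstacles; the only subtlety is making sure the window in the converse direction is long enough to force global agreement from periodicity, which is handled by choosing $N \geq P+Q$. No deeper structure of $f$ and $g$ being almost periodic (as opposed to merely having all truncations periodic) is needed for either direction.
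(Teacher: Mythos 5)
Your overall structure matches the paper's proof: the forward direction is exactly the paper's argument (choose $\ell$ above the maximum value on the segment, use $\Res{f}{\ell} \EZ \Res{g}{\ell}$, and note that values below $\ell$ are unchanged by truncation), and the converse rests on the same underlying fact, namely that periodic functions are determined up to shift by sufficiently long successive segments. However, there is a genuine slip in the converse as written. You choose $N \geq P+Q$ and then justify global agreement by asserting $\mathrm{lcm}(P,Q) \leq PQ \leq N$; but $N \geq P+Q$ does not give $N \geq PQ$, nor $N \geq \mathrm{lcm}(P,Q)$: for instance $P=3$, $Q=5$ gives $P+Q=8 < 15 = \mathrm{lcm}(P,Q)$. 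So the window $[0,N-1]$ need not contain a full common period, and the principle you invoke (two functions with a common period agreeing on a window at least that long agree everywhere) does not apply as stated.

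The repair is easy: take $N \geq PQ$ (or $N \geq \mathrm{lcm}(P,Q)$), and then your argument closes — $\Res{f}{\ell}$ and the shifted copy of $\Res{g}{\ell}$ are both $\mathrm{lcm}(P,Q)$-periodic and agree on a window containing a full period, hence agree on all of $\Z$. Alternatively, $N \geq P+Q$ does in fact suffice, but only via a Fine--Wilf style argument (a finite word of length at least $P+Q-\gcd(P,Q)$ admitting periods $P$ and $Q$ has period $\gcd(P,Q)$, from which both functions are seen to have global period $\gcd(P,Q)$), which you did not supply. With either fix your proof is correct and essentially follows the paper's route, where the converse is phrased as: truncations $\Res{f}{\ell}$ and $\Res{g}{\ell}$ are periodic, and periodic functions are determined up to $\EZ$-equivalence by their finite successive segments.
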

	\begin{proof}
		Suppose that $f \Efin g$. Given any finite successive segment $(f(n),f(n+1),\cdots,f(n+i))$ of $f$, let $\ell$ be greater than the maximal value of $f$ appearing there. Then $\Res{f}{\ell} \EZ \Res{g}{\ell}$, and so this finite successive segment also appears in $g$. By symmetry, any finite successive segment of $g$ also appears in $f$.
		
		On the other hand, suppose that $f$ and $g$ have the same finite successive segments. Given $\ell$, we must show that $\Res{f}{\ell} \EZ \Res{g}{\ell}$. But both $\Res{f}{\ell}$ and $\Res{g}{\ell}$ are periodic, and periodic functions are determined up to $\EZ$-equivalence by their finite successive segments. (If function is not periodic with period $\pi$, then there is a finite successive segment witnessing this. Thus the finite successive segments determine the period, and then a segment of the same size as the period determines the function.)
	\end{proof}
	
	Using the fact that periodic functions are determined by their finite successive segments, we get the following for functions that might take on infinite values.
	
	\begin{proposition}
		Given functions $f,g \colon \mathbb{Z}\to \mathbb{N} \cup \{\infty\}$, $f \Efin g$ if and only if for all $n$ the restrictions $\Res{f}{n}$ and $\Res{g}{n}$ have the same finite successive segments.
	\end{proposition}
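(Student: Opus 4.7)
The plan is to reduce the proposition to the previous one (about functions taking only finite values) by applying it at each level $\ell$ to the restricted functions $\Res{f}{\ell}$ and $\Res{g}{\ell}$, which automatically take only finite values.

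The forward direction is essentially immediate from the definition of $\Efin$. Assuming $f \Efin g$, for every $n$ we have $\Res{f}{n} \EZ \Res{g}{n}$, meaning these restrictions are shifts of each other. Any two functions related by a shift must have exactly the same finite successive segments, because shifting preserves the sequence of consecutive values.

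For the reverse direction, suppose that for every $n$ the functions $\Res{f}{n}$ and $\Res{g}{n}$ have the same finite successive segments. Fix an arbitrary $\ell$; we aim to show $\Res{f}{\ell} \EZ \Res{g}{\ell}$. The key observation is that $\Res{f}{\ell}$ and $\Res{g}{\ell}$ are functions $\Z \to \mathbb{N}$ taking only finite values bounded by $\ell$, and by our hypothesis applied with $n = \ell$ they have the same finite successive segments. Thus the previous proposition applies to the pair $(\Res{f}{\ell}, \Res{g}{\ell})$, yielding $\Res{f}{\ell} \Efin \Res{g}{\ell}$. Instantiating this at the level $\ell$ itself gives $\Res{\Res{f}{\ell}}{\ell} \EZ \Res{\Res{g}{\ell}}{\ell}$, and since the inner restriction already caps values at $\ell$, this simplifies to $\Res{f}{\ell} \EZ \Res{g}{\ell}$, as required. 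Because $\ell$ was arbitrary, $f \Efin g$.

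There is no real obstacle; the only subtlety is to notice that the previous proposition can be applied verbatim to the restricted functions, and that the double restriction $\Res{\Res{f}{\ell}}{\ell}$ collapses back to $\Res{f}{\ell}$. Once this reduction is in place, the rest is a one-line verification.
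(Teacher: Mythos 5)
Your proof is correct and takes essentially the paper's route: the paper states this proposition without a separate proof, as an immediate consequence of the preceding proposition (equivalently, of the fact that periodic functions are determined up to $\EZ$-equivalence by their finite successive segments), and your detour through $\Res{f}{\ell} \Efin \Res{g}{\ell}$ followed by the collapse $\Res{\Res{f}{\ell}}{\ell} = \Res{f}{\ell}$ is just a formal packaging of that same fact. One caveat worth a line: the preceding proposition, as its proof shows, really needs all truncations to be periodic (it is stated in the context of almost periodic functions, and the present statement should likewise be read with that standing hypothesis, since for arbitrary $f,g$ the right-to-left direction fails, e.g.\ for two non-shift-equivalent $\{0,1\}$-valued sequences with the same finite subwords), and $\Res{f}{\ell}$ is periodic rather than almost periodic in the paper's sense, so strictly you should remark that the cited proof applies verbatim to such functions --- which it does, because every truncation of a periodic function is again periodic.
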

	
	The next two propositions show that the relation $\Efin$ between almost periodic functions is related to the equivalence  $\equiv_\lambda$ for the corresponding $\mathbb{Z}$-sums.
	
	\begin{proposition}\label{prop:transfer}
		Consider two almost periodic functions $f$ and $g$ potentially with infinite values and a fixed $\lambda$-mixable pair. Let $L_f$ and $L_g$ be any $(f,\Z)$- and $(g,\Z)$-sums respectively. Then 
		$L_f\equiv_\lambda L_g$ if and only if $f \Efin g$.
	\end{proposition}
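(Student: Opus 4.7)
The plan is to prove the two implications separately: the forward direction $f\Efin g \Rightarrow L_f\equiv_\lambda L_g$ by a back-and-forth played summand by summand, and the backward direction $L_f\equiv_\lambda L_g \Rightarrow f\Efin g$ by encoding the existence of every finite successive segment of $\Res{f}{n}$ in a single formula of complexity strictly below $\lambda$.

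For the forward direction, since $\lambda$ is a limit it suffices to verify $L_f\equiv_{\delta_i} L_g$ for each $i$ in the fundamental sequence. The hypothesis $\Res{f}{i}\EZ\Res{g}{i}$ produces a shift $c$ with $\Res{f}{i}(n)=\Res{g}{i}(n+c)$ for all $n\in\Z$. I will first establish a summand-matching claim: for every $n$ the summand $L_{f(n)}^{(n)}$ of $L_f$ is $\equiv_{\delta_i}$-equivalent to $L_{g(n+c)}^{(n+c)}$ of $L_g$. If $\Res{f}{i}(n)=j<i$ both literally equal $L_j$; if $\Res{f}{i}(n)=i$ both are either some $L_j$ with $j\geq i$ or a limit structure, and iterating condition (3) of Definition \ref{defn:l-mixable} together with monotonicity of $\delta_i$ yields $L_j\equiv_{\delta_i} L_i$ for every $j\geq i$, with the analogue for limit structures coming from the definition of limit structure (modulo the subtlety flagged below). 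With the summands matched and the ``$1+K+1$'' delimiters automatically in place, a standard back-and-forth on $\Z$-sums of pairwise $\equiv_{\delta_i}$-equivalent summands then yields $L_f\equiv_{\delta_i} L_g$.

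For the backward direction, by the finite-successive-segment characterization of $\Efin$ stated above it suffices to check that for every $n$ the restrictions $\Res{f}{n}$ and $\Res{g}{n}$ carry the same finite successive segments. Fix $n$ and a segment $(a_0,\ldots,a_k)$ appearing in $\Res{f}{n}$. I will write an existential sentence $\psi$ whose truth in a $\Z$-sum detects this segment: it asserts the existence of $x_0<y_0<\cdots<x_k<y_k$ with $D_K(y_j,x_{j+1})$ for each $j<k$ (so that, by Lemmas \ref{lem:n-sum}, \ref{lem:Z-sum} and \ref{lem:def-pieces}, together with Lemma \ref{lem:inherit} which transports condition (5) to limit structures, the intervals $(x_j,y_j)$ must be consecutive summands), together with the condition that each $(x_j,y_j)$ realizes the appropriate $\delta_n$-type: that of $L_{a_j}$ if $a_j<n$ (via its Scott sentence, which has complexity $<\lambda$), and the complete $\delta_n$-theory of $L_n$ if $a_j=n$ (satisfied simultaneously by every $L_\ell$ with $\ell\geq n$ and by every limit structure, and of complexity $\Pi_{\delta_n+1}<\lambda$). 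All ingredients therefore have complexity $<\lambda$, so $\psi$ does too; since $L_f\equiv_\lambda L_g$, the sentence $\psi$ transfers from $L_f$ to $L_g$, producing a matching consecutive run of summands and hence the required segment in $\Res{g}{n}$. Symmetry finishes the direction.

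The principal obstacle is handling limit structures in the forward direction, because the definition of a limit structure only guarantees $\N\equiv_{\delta_i} L_{j(i)}$ for some index $j(i)$ that might be smaller than $i$, whereas the summand-matching claim requires $\N\equiv_{\delta_i} L_i$. I plan to dispatch this with a pigeonhole argument: if the indices $j(i)$ were bounded along a cofinal subset of $i$, then some fixed $L_k$ would satisfy $\N\equiv_{\delta_{i_m}} L_k$ along a sequence $i_m\to\infty$, forcing $\N\equiv_\lambda L_k$; since $SR(L_k)<\lambda$ provides $L_k$ with a Scott sentence of complexity $<\lambda$, this collapses to $\N\cong L_k$, reducing the relevant summand to an ordinary finite-value case for which the shift argument applies verbatim. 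Otherwise $j(i)\geq i$ for all sufficiently large $i$, so $\N\equiv_{\delta_i} L_i$ and the matching claim holds, closing the argument.
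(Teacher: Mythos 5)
Your overall plan is the same as the paper's: one direction by a summand-by-summand back-and-forth along the shift witnessing $\Res{f}{i}\EZ\Res{g}{i}$, the other by detecting each finite successive segment of $\Res{f}{n}$ with a sentence of complexity $<\lambda$ built from $D_K$ and transferring it to $L_g$. However, two steps as you state them do not go through. First, in the detection sentence you only require $D_K(y_j,x_{j+1})$ between the intervals $(x_j,y_j)$; this does not force those intervals to be consecutive summands, since an interval could span several summands together with interior $K$-blocks, and none of Lemmas \ref{lem:n-sum}, \ref{lem:Z-sum}, \ref{lem:def-pieces}, \ref{lem:inherit} rules that out by themselves (Lemma \ref{lem:def-pieces} anchors to a first element, which a $\Z$-sum lacks). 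You need the additional clause, present in the paper's sentence, that no pair $x,y$ lying between the first and last chosen elements other than the designated ones satisfies $D_K(x,y)$; with that clause the pullback in $L_g$ really does produce a consecutive run, and then condition (4) of Definition \ref{defn:l-mixable} (via condition (3) and monotonicity of the $\delta_i$) is what lets you read off $\Res{g}{n}$-values from the realized $\delta_n$-types.

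Second, your treatment of limit structures is flawed in both branches. The stated dichotomy is not exhaustive: the negation of ``$j(i)$ bounded along a cofinal set'' is $j(i)\to\infty$, not $j(i)\geq i$ eventually (though $j(i)\to\infty$ does suffice, since for each target level $\delta_n$ one can pick $i$ with $j(i)\geq n$ and down-transfer to get $\N\equiv_{\delta_n}L_n$). More seriously, in the pigeonhole branch the conclusion $\N\cong L_k$ does \emph{not} reduce matters to ``an ordinary finite-value case for which the shift argument applies verbatim'': at a level $\delta_i$ with $i>k+1$ the corresponding summand of $L_g$ is $\equiv_{\delta_i}L_i$, and $L_k\equiv_{\delta_i}L_i$ would contradict condition (4), so the matching fails; indeed, if a slot with $f(n)=\infty$ were filled by a copy of some $L_k$, the conclusion $L_f\equiv_\lambda L_g$ can genuinely fail. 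The correct resolution is that this case cannot arise: a limit structure is (as the paper uses the notion, and as uniqueness considerations force) one with $\N\equiv_{\delta_n}L_n$ for every $n$, equivalently with witnesses of arbitrarily large index, and such an $\N$ is never isomorphic to any $L_k$ by condition (4). The paper's proof simply invokes this property of limit structures; your repair should discard the pigeonhole branch rather than claim it reduces to the finite-value case.
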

	
	\begin{proof}
		We abuse notation by writing, when $f(n) = \infty$, $L_{f(n)}$ for the particular limit structure in the $n$th summand of $L_f$.
		
		For the backward direction, consider an arbitrary $\lambda$-mixable pair $(\{L_i\},K)$ and let $\delta_i$ be the fundamental sequence associated to this pair.
		It is enough to show that for all $n$ $L_f\equiv_{\delta_n} L_g$.
		Fix $n$ and let $a$ be such that for all $m$ $\Res{f}{n}(m+a)=\Res{g}{n}(m)$.
		We show that $L_f\equiv_{\delta_n} L_g$ by looking summand by summand.
		In particular, we compare the $(m+a)^{th}$ summand of $L_f$ with the $m^{th}$ summand of $L_g$.
		We need only show that $L_{f(m+a)}\equiv_{\delta_n} L_{f(m)}$.
		However, this follows at once from the fact that $\Res{f}{n}(m+a)=\Res{g}{n}(m)$.
		
		We now show the forward direction. For each $\ell,m,n$, we want to show that the finite successive segment $\Res{f}{n}(m),\ldots,\Res{f}{n}(m+\ell-1)$ is also a finite successive segment of $\Res{g}{n}$. Consider the sentence which says that there are $x_0 < y_0 < \ldots < x_\ell < y_\ell$ such that
		\begin{enumerate}
			\item $D_K(x_i,y_i)$ for all $i$ and all other $x,y$ with $x_0 \leq x,y \leq y_\ell$ does not satsfiy $D_K(x,y)$, and
			\item The interval $(y_i,x_{i+1})$ is $\equiv_{\delta_n}$-equivalent to $L_{f(m+i)}$.
		\end{enumerate} 
		Informally, the above formula states that there is an interval of type
		\[ 1 + K + 1 + M_m + 1 + K + 1 + M_{m+1} + 1 + \cdots + 1 + K + 1 + M_{m+\ell-1} + 1 + K + 1 \]
		with $M_{m+i} \equiv_{\delta_n} L_{f(m+i)}$ for $0 \le i < \ell$.
		This is a $\Pi_{< \lambda}$ sentence, and holds of $L_f$. Thus it must also hold of $L_g$. In particular, there is some $m'$ such that for $0 \leq i < \ell$, $L_{g(m'+i)} \equiv_{\delta_n} L_{f(m+i)}$. This implies that $\Res{g}{n}(m'+i) = \Res{f}{n}(m+i)$.
		
		Since for a fixed $n$ this is true for every $\ell,m$, every finite successive segment of $\Res{f}{n}$ is a finite successive segment of $\Res{g}{n}$, and a similar argument in the other direction shows that every finite successive segment of $\Res{g}{n}$ is a finite successive segment of $\Res{f}{n}$. Moreover, this is true for all $n$, and hence $f \Efin g$.
	\end{proof}
	
	\begin{proposition}
		If $L_f$ is a $\Z$-sum with respect to a $\lambda$-mixable pair $(\{L_i\}_{i\in\omega},K)$ and $M\equiv_\lambda L_f$, then $M$ is a $(g,\Z)$-sum for some $g \Efin f$ with potentially infinite values. In particular, if $(\{L_i\}_{i\in\omega},K)$ has a unique limit $L_\infty$, then $M \cong L_g$.
	\end{proposition}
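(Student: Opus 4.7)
The plan is to decompose $M$ along its $K$-blocks mirroring the decomposition of $L_f$, identify each resulting interval as a limit structure for $(\{L_i\}_{i\in\omega},K)$, and then invoke Proposition \ref{prop:transfer}.

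First, since $\SR(K)<\lambda$, the sentence $\varphi_K^{\mathbb{Z}}$ from Lemma \ref{lem:Z-sum} has complexity $\Pi_{<\lambda}$. Because $L_f\models\varphi_K^{\mathbb{Z}}$ and $M\equiv_\lambda L_f$, we get $M\models\varphi_K^{\mathbb{Z}}$, which yields a decomposition $M\cong\sum_{n\in\mathbb{Z}}(1+M_n+1+K)$ satisfying property $(*)$ of that lemma. The indexing $n\in\mathbb{Z}$ is only determined up to a shift, but once chosen it is canonical.

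Second, and this is the key step, I would show that each $M_n$ is a limit structure for the pair, i.e., that for every $\ell$ there exists some $i$ with $M_n\equiv_{\delta_\ell}L_i$. For each fixed $\ell$, consider the sentence $\theta_\ell$ asserting: for every pair $(x,y)$ with $D_K(x,y)$ together with the immediate next $D_K$-pair $(x',y')$ to its right, the interval $(y,x')$ is $\equiv_{\delta_\ell}$-equivalent to some $L_i$. The inner condition ``$(y,x')\equiv_{\delta_\ell}L_i$'' is obtained by relativizing to the definable set $(y,x')$ the $\Pi_{\delta_\ell+1}$ sentence pinning down the full $\delta_\ell$-theory of $L_i$; the disjunction over $i\in\omega$ is then $\Sigma_{\delta_\ell+2}$, so the outermost universally-quantified sentence $\theta_\ell$ is $\Pi_{<\lambda}$. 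Now $L_f\models\theta_\ell$: if $f(n)<\infty$ then $L_{f(n)}=L_{f(n)}$ witnesses the disjunct; if $f(n)=\infty$ then by definition of a limit structure there is some $i$ with $L^{(n)}_\infty\equiv_{\delta_\ell}L_i$. Transferring via $M\equiv_\lambda L_f$ gives $M\models\theta_\ell$ for every $\ell$, and hence the desired property for every $M_n$ and every $\ell$.

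Finally, define $g\colon\mathbb{Z}\to\mathbb{N}\cup\{\infty\}$ by $g(n)=i$ when $M_n\cong L_i$ and $g(n)=\infty$ otherwise; in either case $M_n$ is a valid choice of $L^{(n)}_{g(n)}$ in the definition of an $(f,\mathbb{Z})$-sum, exhibiting $M$ as a $(g,\mathbb{Z})$-sum for $(\{L_i\}_{i\in\omega},K)$. The forward direction of Proposition \ref{prop:transfer}, whose proof uses only the $\lambda$-mixable structure and transfers back-and-forth statements through $L_f\equiv_\lambda M=L_g$ rather than almost-periodicity of $g$, then gives $g\Efin f$. When there is a unique limit structure $L_\infty$, the $(g,\mathbb{Z})$-sum is unique, so $M\cong L_g$.

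The main obstacle is the second step: ensuring that ``$M_n$ is $\equiv_{\delta_\ell}$-equivalent to some $L_i$'' can be packaged as a single $\Pi_{<\lambda}$ sentence whose universal version applies uniformly to all pieces of the decomposition simultaneously. This requires careful bookkeeping of the countable disjunction over $i$ and of the relativization that localizes the $\delta_\ell$-theory of each $L_i$ to an interval of $M$ definable from two parameters; once this complexity calculation is in hand, the rest is a straightforward application of the lemmas of Section~3 and of Proposition \ref{prop:transfer}.
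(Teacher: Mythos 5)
Your proposal follows essentially the same route as the paper: decompose $M$ via the $\Pi_{<\lambda}$ sentence $\varphi^{\mathbb{Z}}_K$ of Lemma \ref{lem:Z-sum}, transfer a $\Pi_{<\lambda}$ constraint on the definable blocks between consecutive maximal $K$-intervals, read off $g$, and finish with the forward direction of Proposition \ref{prop:transfer} (your remark that this direction never uses almost periodicity of $g$ is correct, and the paper makes the same silent move).

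The one substantive difference is in the constraining sentence, and it leaves a small gap in your key step. The paper transfers the single $\Pi_\lambda$ sentence saying that each block is, for every $n$, either isomorphic to one of $L_0,\dots,L_n$ (via their Scott sentences) or $\equiv_{\delta_{n+1}} L_{n+1}$; this immediately yields the clean dichotomy that each $M_n$ is isomorphic to some $L_i$ or satisfies $M_n\equiv_{\delta_i}L_i$ for \emph{all} $i$, i.e., is a limit structure in the sense actually used in the uniqueness lemma. Your sentences $\theta_\ell$ only give, for each $\ell$, that $M_n\equiv_{\delta_\ell}L_{i}$ for \emph{some} $i=i(\ell)$ which may drift with $\ell$. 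That matches the definition of limit structure as literally written, but for the ``in particular'' clause (unique limit structure $\Rightarrow M_n\cong L_\infty$ when $g(n)=\infty$) you need the index-matching form, since the uniqueness argument identifies $L_\infty$ by $L_\infty\equiv_{\delta_i}L_i$ for arbitrarily large $i$. The gap is easily closed: if some fixed $i_0$ works for cofinally many $\ell$, then $M_n\equiv_\lambda L_{i_0}$, and since $\SR(L_{i_0})<\lambda$ this gives $M_n\cong L_{i_0}$, so $g(n)$ is finite; otherwise the witnesses $i(\ell)$ are unbounded, and for any $m$ one can choose $\ell\geq m$ with $i(\ell)\geq m$, whence $M_n\equiv_{\delta_m}L_{i(\ell)}\equiv_{\delta_m}L_m$ (using $L_m\equiv_{\delta_m}L_j$ for $j\geq m$), so $M_n$ is a genuine limit structure. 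With that pigeonhole argument added, your proof is complete and agrees with the paper's in all essentials; your complexity bookkeeping for $\theta_\ell$ is also fine, since all that matters is that each $\theta_\ell$ sits strictly below $\lambda$.
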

	
	\begin{proof}
		Since $L_f$ satisfies the sentence $\varphi^{\mathbb{Z}}_K$ from Lemma \ref{lem:Z-sum}, $M$ must also satisfy it, and so there is a unique way (up to a shift) to write
		\[ M = \sum_{n \in \mathbb{Z}} (1 + M_n + 1 + K)\]
		where ($*$) any finite partial sum $1+M_{n}+1+K+1+M_{n+1}+1+K+\cdots+1+M_{n+k} + 1 + K$ has intervals isomorphic to $K$ only within the written $K$ blocks (or as the entire written $K$ block).
		
		Define $\theta(x,y)$ if there are $x' < x$ and $y' > y$ such that $D_K(x',x)$ and $D_K(y,y')$ but $D_K$ does not hold of any $u,v \in (x,y)$. Then $\theta(x,y)$ defines the endpoints of the intervals $1 + M_n + 1$ in $M$ and $1 + L_{f(n)} + 1$ in $L_f$.
		
		$L_f$ satisfies the $\Pi_{\lambda}$ sentence which says that for all $x,y$ if $\theta(x,y)$ then for every $n$ we have $(x,y) \equiv_{\delta_{n+1}} L_{n+1}$ or $(x,y) \cong L_0$ or $(x,y) \cong L_1$ or \ldots or $(x,y) \cong L_n$. We express isomorphism to $L_i$ using the Scott sentence for $L_i$. Since this sentence is true in $L_f$, it is also true in $M$. 
		
		Thus each $M_n$ is either isomorphic to some $L_i$, or $M_n 
		\equiv_{\delta_i} L_i$ for all $i$. The latter implies that $M_n$ is a limit structure of $(\{L_i\},K)$. Define $g \colon \mathbb{Z} \to \mathbb{N}\cup \{\infty\}$ such that $M_n = L_{g(n)}$ (or $g(n) = \infty$ if $M_n$ is a limit structure). Thus $M$ is a $(g,\Z)$-sum. The fact that $g \Efin f$ follows from Proposition \ref{prop:transfer}.
		
		For the final clause, if $(\{L_i\},K)$ has a unique limit structure $L_\infty$, then for all $n$, $M_n \cong L_{g(n)}$, and $M$ is the (unique) $(g,\Z)$-sum $L_g$.
	\end{proof}
	
	Let $L_\lambda$ be a $\mathbb{Z}$-sum with respect to a $\lambda$-mixable pair with unique limit structure and some almost periodic function $f$. Then $L_\lambda$, and all structures $M \equiv_\lambda L_\lambda$, are $\mathbb{Z}$-sums with respect this same $\lambda$-mixable pair (but possibly different almost periodic functions $g \Efin f$) and so have Scott sentence complexity $\Sigma_{\lambda + 1}$. This is almost Theorem \ref{thm:main2} except that we need to construct many different $g \Efin f$. This is our next goal.
	
	\section{Examples of almost periodic Functions}
	
	A large class of almost periodic functions will come from shifts of the $p$-adic valuations. Fix a prime $p$. Let $\mathbb{Z}_p$ be the $p$-adic integers, and recall that they are the inverse limit $\mathbb{Z}_p:= \varprojlim_n \mathbb{Z} / p^n$. Let $\pi_n : \mathbb{Z}_p \to \mathbb{Z} / p^n$ be the standard projections. Let $v_p \colon \mathbb{Z}_p \to \mathbb{N} \cup \{\infty\}$ be the $p$-adic valuation. For simplicity, we write $v$ for $v_p$ with the fixed prime $p$ understood.
	
	\begin{definition}
		Given $t \in \mathbb{Z}_p$, define $f_t \colon \mathbb{Z} \to \mathbb{N} \cup \{\infty\}$ by $f_t(k) = v(k-t)$.
	\end{definition}
	
	If the reader prefers not to think $p$-adically, an alternate definition of $f_t$ can be seen in Theorem \ref{thm:E0-red}, and all of the following can be proved directly.
	
	\begin{lemma}
		For all $t \in \mathbb{Z}_p$, $f_t$ is an almost periodic function. 
	\end{lemma}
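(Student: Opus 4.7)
The plan is to verify directly both clauses of the definition of almost periodic: first, that every truncation $\Res{f_t}{n} = \min(n, f_t)$ is periodic, and second, that $f_t$ itself is not periodic.

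For the periodicity of $\Res{f_t}{n}$, I would observe that for $k \in \mathbb{Z}$ and any $m \leq n$, we have $v(k - t) \geq m$ if and only if $p^m$ divides $k - t$ in $\mathbb{Z}_p$, equivalently $\pi_m(k) = \pi_m(t)$. Thus the value of $v(k-t)$ up to the cap $n$ is determined entirely by $\pi_n(k) \in \mathbb{Z}/p^n$. Since $\pi_n(k)$ depends only on $k$ modulo $p^n$, the function $\Res{f_t}{n}$ is periodic with period dividing $p^n$.

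For non-periodicity, I would split into two cases. If $t \in \mathbb{Z}$, then $f_t(t) = \infty$ while $f_t(k) < \infty$ for every $k \neq t$, so the infinite value occurs exactly once and $f_t$ clearly cannot have a nonzero period. If $t \in \mathbb{Z}_p \setminus \mathbb{Z}$, then $k - t \neq 0$ in $\mathbb{Z}_p$ for every $k \in \mathbb{Z}$, so $f_t$ takes only finite values. Here the key input is that $\mathbb{Z}$ is dense in $\mathbb{Z}_p$, so for each $m$ there exists $k \in \mathbb{Z}$ with $v(k-t) \geq m$, meaning $f_t$ is unbounded. Suppose, for contradiction, that $f_t$ has a nonzero period $\pi$. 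Choose $k \in \mathbb{Z}$ with $v(k-t) > v(\pi)$. Then by the ultrametric (strong triangle) inequality,
\[
v((k + \pi) - t) = v((k-t) + \pi) = \min(v(k-t), v(\pi)) = v(\pi),
\]
since the two valuations differ. But $f_t(k) = v(k-t) > v(\pi) = f_t(k+\pi)$, contradicting periodicity.

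There is no serious obstacle here: the only subtlety is separating the case $t \in \mathbb{Z}$ (where non-periodicity is witnessed by the unique infinite value) from $t \in \mathbb{Z}_p \setminus \mathbb{Z}$ (where it is witnessed by the ultrametric inequality applied to points $k$ close to $t$ in the $p$-adic metric). Both of these arguments are short and essentially standard properties of the $p$-adic valuation.
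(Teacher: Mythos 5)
Your proof is correct and rests on essentially the same standard facts about the $p$-adic valuation as the paper's: the paper shows each truncation $\Res{f_t}{\ell}$ has period $p^\ell$ directly from $v(x+y)\geq\min(v(x),v(y))$ (with equality when the valuations differ), which is the same content as your observation that the capped value factors through $\mathbb{Z}/p^n$. The only real difference is in the non-periodicity step, where the paper avoids your case split by simply noting that $f_t$ takes unboundedly large finite values (witnessed by $t+p^\ell$ when $t\in\mathbb{Z}$, and by the partial sums of the $p$-adic expansion otherwise), which already rules out periodicity since a periodic function takes only finitely many values; your two-case argument (unique infinite value, respectively the ultrametric equality against a putative period) reaches the same conclusion correctly.
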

	\begin{proof}
		First we show that $f_t$ is almost periodic. This follows from the fact that $v(x+y) \geq \min(v(x),v(y))$ with equality if $v(x) \neq v(y)$. Given $\ell$, for all $k$ we have
		\[ f_t(k+p^\ell) = v(k + p^\ell - t) \geq \min(v(k-t),\ell) = \min(f_t(k),\ell). \]
		If $f_t(k) < \ell$, then we have equality, $f_t(k+p^\ell) = f_t(k)$, and if $f_t(k) \geq \ell$, then we also have $f_t(k+p^\ell) \geq \ell$.
		
		To see that $f_t$ is not periodic, it suffices to note that $f_t(k) = v(k-t)$ takes unbounded values in $\mathbb{N}$. If $t \in \mathbb{Z}$, then $f_t(t + p^\ell) = v(p^\ell) = \ell$ for sufficiently large $\ell$. Otherwise, if $t \notin \mathbb{Z}$, write $t$ as a $p$-adic series $t = \sum_{i=0}^\infty a_i p^i$. Consider the partial sums $t_n = \sum_{i=0}^n a_i p^i \in \mathbb{Z}$. Then $f_t(t_n) = v(t_n - t) > n$.
	\end{proof}
	
	\begin{lemma}\label{lem:efin-equiv}
		Given  $s,t \in \mathbb{Z}_p$, $f_s \Efin f_t$.
	\end{lemma}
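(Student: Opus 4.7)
The plan is to use the fact that $\Res{f_t}{\ell}(k) = \min(\ell, v(k-t))$ depends only on the residue of $k-t$ modulo $p^\ell$. Concretely, for any $x \in \mathbb{Z}_p$ we have $v(x) \geq \ell$ if and only if $\pi_\ell(x) = 0$, and when $v(x) < \ell$ the value $v(x)$ is determined by $\pi_\ell(x)$. Therefore $\min(\ell, v(x))$ is a well-defined function of $\pi_\ell(x) \in \mathbb{Z}/p^\ell$, and in particular $\Res{f_t}{\ell}(k)$ depends only on $\pi_\ell(k-t)$.

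Given $\ell$, I would choose an integer $n \in \mathbb{Z}$ such that $n \equiv t - s \pmod{p^\ell}$; this is possible because the reduction map $\mathbb{Z} \to \mathbb{Z}/p^\ell$ is surjective. With this choice, for every $k \in \mathbb{Z}$ we have
\[ (k + n) - t \equiv k - s \pmod{p^\ell}, \]
so $\pi_\ell((k+n)-t) = \pi_\ell(k-s)$, and hence $\Res{f_s}{\ell}(k) = \Res{f_t}{\ell}(k+n)$ for all $k$. This is exactly the statement $\Res{f_s}{\ell} \EZ \Res{f_t}{\ell}$. Since $\ell$ was arbitrary, $f_s \Efin f_t$.

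There is no real obstacle here; the argument is essentially the observation that the residue class of $t$ modulo $p^\ell$ can be replicated by an integer shift, and the truncated valuation cannot distinguish $t$ from any element of $\mathbb{Z}_p$ with the same reduction mod $p^\ell$. The only thing to be slightly careful about is the direction of the shift (choosing $n \equiv t-s$ rather than $s-t$), which falls out of writing down what it means for $\Res{f_s}{\ell}(k)$ to equal $\Res{f_t}{\ell}(k+n)$.
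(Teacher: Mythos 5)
Your proof is correct and takes essentially the same approach as the paper: both arguments choose an integer $n$ congruent to the $p$-adic difference of the shifts modulo a suitable power of $p$ and then observe that the truncated valuation $\min(\ell, v(\cdot))$ cannot detect a discrepancy of valuation $\geq \ell$. Your phrasing via well-definedness on $\mathbb{Z}/p^\ell$ is just a repackaging of the ultrametric inequality the paper uses, so there is nothing to add.
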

	\begin{proof}
		Fix $\ell$. Let $s = \sum_{i=0}^\infty a_i p^i$ and $t = \sum_{i=0}^\infty b_i p^i$ be $p$-adic series for $s$ and $t$. Let $n =  \sum_{i=0}^\ell (a_i - b_i) p^i$. Note that $v(t-s + n) > \ell$. Then for any $k\in\mathbb{Z},$
		\[ v(k+ n - s) = v(k - t + (t-s+n) ) \geq \min(v(k-t),v(t-s+n))\]
		with equality if $v(k-t) \leq \ell$. Thus
		\[ f_s^{\leq \ell}(k + n) = f_t^{\leq \ell}(k).\qedhere\]
	\end{proof}
	
	\begin{lemma}\label{lem:diff}
		Given $s,t \in \mathbb{Z}_p$, $s-t \in \mathbb{Z}$ if and only if $f_s \EZ f_t$.
	\end{lemma}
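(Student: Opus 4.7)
The plan is to prove the two directions separately, both leveraging the strong non-archimedean property $v(x+y) \geq \min(v(x), v(y))$ with equality when $v(x) \neq v(y)$.

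For the forward direction, suppose $s - t = n \in \mathbb{Z}$. Then for every $k \in \mathbb{Z}$ we have
\[ f_s(k) = v(k - s) = v((k-n) - t) = f_t(k - n), \]
so taking the shift $-n$ in the definition of $\EZ$ gives $f_s \EZ f_t$. This is purely formal and requires no further work.

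For the backward direction, suppose $f_s \EZ f_t$, witnessed by an integer $n$ so that $f_s(m) = f_t(m+n)$ for all $m \in \mathbb{Z}$. Set $u := t - n \in \mathbb{Z}_p$. Then the hypothesis rewrites as $v(m - s) = v(m - u)$ for every $m \in \mathbb{Z}$, and it suffices to deduce $s = u$, for then $s - t = -n \in \mathbb{Z}$. I will argue by contradiction: if $s \neq u$ in $\mathbb{Z}_p$, let $\ell := v(s - u) < \infty$. Using density of $\mathbb{Z}$ in $\mathbb{Z}_p$, pick $m \in \mathbb{Z}$ with $v(m - s) > \ell$ (such $m$ exists because the partial sums of the $p$-adic expansion of $s$ are integers that approximate $s$ arbitrarily closely). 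Then by the strict-inequality case of the ultrametric law,
\[ v(m - u) = v\bigl((m - s) + (s - u)\bigr) = \min\bigl(v(m-s), v(s - u)\bigr) = \ell, \]
while $v(m - s) > \ell$. This contradicts $v(m-s) = v(m-u)$, so $s = u$ as required.

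I do not expect any step to be a serious obstacle; the main subtlety is simply invoking density of $\mathbb{Z}$ in $\mathbb{Z}_p$ to produce integer approximations of the $p$-adic number $s$ at a precision exceeding $v(s - u)$, after which the strict ultrametric inequality finishes the job automatically.
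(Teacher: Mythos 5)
Your proof is correct and takes essentially the same approach as the paper: the forward direction is the same one-line computation, and the backward direction likewise comes down to approximating by integers (density of $\mathbb{Z}$ in $\mathbb{Z}_p$) together with the ultrametric behavior of $v$. The paper phrases the converse directly---integer approximations $t_i \to t$ satisfy $v(t_i+n-s)\to\infty$, so $t_i+n\to s$ and $s=t+n$---rather than by contradiction at a single well-chosen integer, but this is only a cosmetic difference.
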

	\begin{proof}
		If $s-t = n \in \mathbb{Z}$, then
		\[ f_s(k+n) = v(k+n-s) = v(k-t) = f_t(k).\]
		On the other hand, suppose that $f_s \EZ f_t$ so that for some $n$, $f_s(k+n) = f_t(k)$. Then we claim that $s-t = n$. Indeed, given $t_i \to t$ an integer approximation of $t$, we must have
		\[ f_t(t_i) = v(t_i -t) \to \infty.\]
		But we also have $f_t(t_i) = f_s(t_i + n)$ and so
		\[ f_s(t_i + n) = v(t_i + n - s) \to \infty\]
		so that $t_i + n \to s$ is an integer approximation of $s$. Thus $s = t + n$.
	\end{proof}
	
	\begin{proposition}
		Given $t \in \mathbb{Z}_p$, $f_t$ takes on infinite values if and only if $t \in \mathbb{Z}$, in which case $f_t(t) = \infty$ is the only infinite value of $f_t$.
	\end{proposition}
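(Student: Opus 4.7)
The plan is to unpack the definition $f_t(k) = v(k-t)$ and use the basic fact that for any $x \in \mathbb{Z}_p$, $v(x) = \infty$ if and only if $x = 0$. Everything will follow from this characterization of when the valuation is infinite, combined with the fact that $k \in \mathbb{Z}$ is part of the setup (the domain of $f_t$ is $\mathbb{Z}$, not $\mathbb{Z}_p$).

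First I would handle the ``if'' direction. Suppose $t \in \mathbb{Z}$. Then $t$ itself is a legitimate input to $f_t$, and $f_t(t) = v(t - t) = v(0) = \infty$, which gives one infinite value. For any other $k \in \mathbb{Z}$ with $k \neq t$, the element $k - t$ is a nonzero integer, hence a nonzero element of $\mathbb{Z}_p$, so $v(k-t)$ is finite. Thus $t$ is the unique input at which $f_t$ takes the value $\infty$.

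Next I would handle the ``only if'' direction by contrapositive. Suppose $t \in \mathbb{Z}_p \setminus \mathbb{Z}$. For any $k \in \mathbb{Z}$, the difference $k - t$ is nonzero in $\mathbb{Z}_p$: indeed, if we had $k - t = 0$, then $t = k \in \mathbb{Z}$, contradicting our assumption. Hence $v(k-t)$ is finite for every $k \in \mathbb{Z}$, so $f_t$ takes only finite values.

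There is no real obstacle here; the proposition is essentially a bookkeeping observation about when $0$ appears in the set $\{k - t : k \in \mathbb{Z}\}$ inside $\mathbb{Z}_p$. The only thing worth being explicit about is the standard fact $v(x) = \infty \Leftrightarrow x = 0$, which is immediate from the definition of $v$ on $\mathbb{Z}_p$ as the supremum of the $n$ such that $\pi_n(x) = 0$.
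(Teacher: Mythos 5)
Your proof is correct and follows exactly the same route as the paper, which simply observes that $v(x) = \infty$ if and only if $x = 0$; you have just spelled out the bookkeeping that the paper leaves implicit.
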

	\begin{proof}
		This is because $v(x) = \infty$ if and only if $x = 0$.
	\end{proof}
	
	\begin{theorem}\label{thm:E0-red}
		There is a Borel reduction $E_0 \leq \EZ$ with the image contained inside a single $\Efin$ class. In particular, there is an $\Efin$ class with continuum many non-$\EZ$-equivalent almost periodic functions.
	\end{theorem}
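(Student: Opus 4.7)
The plan is to compose the map $t \mapsto f_t$ with a Borel reduction of $E_0$ to the equivalence relation ``differ by an integer'' on $\mathbb{Z}_p$. By Lemma \ref{lem:efin-equiv} every $f_t$ lies inside a single $\Efin$-class, and by Lemma \ref{lem:diff} the relation $f_s \EZ f_t$ coincides with $s - t \in \mathbb{Z}$. So such a composition produces exactly a map of the kind asked for by the theorem.

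Fix a prime $p \geq 3$ and define $\phi \colon 2^\omega \to \mathbb{Z}_p$ by $\phi(x) = \sum_{i=0}^\infty x(i)\, p^i$. This is continuous, hence Borel. The forward direction of the reduction is immediate: if $x \, E_0 \, y$ then $\phi(x) - \phi(y) = \sum_{i} (x(i) - y(i))\, p^i$ is a finite $\mathbb{Z}$-linear combination of powers of $p$, hence lies in $\mathbb{Z}$.

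The substantive step is the converse: $\phi(x) - \phi(y) = m \in \mathbb{Z}$ forces $x \, E_0 \, y$. Run standard base-$p$ addition $\phi(x) = \phi(y) + m$ with carries $c_i \in \{0,1\}$ and digits $\phi(x)_i \equiv \phi(y)_i + m_i + c_i \pmod{p}$, where $m_i$ are the $p$-adic digits of $m$. Beyond the nonzero prefix of $m$ the digit $m_i$ is constant: $m_i = 0$ if $m \geq 0$ and $m_i = p - 1$ if $m < 0$. Because $\phi(x)_i, \phi(y)_i \in \{0, 1\}$ and $p \geq 3$, a brief case analysis shows that the carry is eventually constant — at $0$ in the first case and at $1$ in the second — and once constant it forces $\phi(x)_i = \phi(y)_i$. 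So $x$ and $y$ agree cofinitely, i.e.\ $x \, E_0 \, y$.

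The main obstacle is the $m < 0$ case, where $m$'s $p$-adic tail is $(p-1, p-1, \ldots)$: if $\phi(y)_i = 0$ and the incoming carry is $0$, then $\phi(x)_i = p - 1 \notin \{0,1\}$, contradicting the assumption on $\phi(x)$. This rules out carry $0$ meeting digit $0$, so the carry must flip to $1$ at the next position where $\phi(y)_i = 1$, after which all carries remain $1$ and all digits agree. This is precisely where the restriction $p \geq 3$ is used; for $p = 2$ the example $x = (1,1,1,\ldots)$, $y = (0,0,\ldots)$ with $\phi(x) - \phi(y) = -1$ shows the analogous map fails to be a reduction. Combining the resulting Borel reduction $E_0 \leq \EZ$ with the fact that $E_0$ has $2^{\aleph_0}$ many equivalence classes yields the final assertion.
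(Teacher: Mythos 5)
Your proof is correct, and at its core it follows the same route as the paper: code a binary sequence as the digit string of a $p$-adic integer, apply $t \mapsto f_t$, and invoke Lemma \ref{lem:efin-equiv} (all $f_t$ lie in one $\Efin$ class) together with Lemma \ref{lem:diff} ($f_s \EZ f_t$ iff $s-t \in \mathbb{Z}$). The one substantive difference is your choice of base, and it is a genuine improvement: the paper works in $\mathbb{Z}_2$, sending $\bar a$ to $t_a = \sum_i a_i 2^i$, and asserts that $\bar a \mathrel{E_0} \bar b$ iff $t_a - t_b \in \mathbb{Z}$. Your $p=2$ counterexample ($x=(1,1,1,\dots)$, $y=(0,0,\dots)$, i.e.\ $t_x=-1$, $t_y=0$) shows that this biconditional is literally false: the two distinct $E_0$-classes of eventually-$0$ and eventually-$1$ sequences land in the same coset $\mathbb{Z}\subset\mathbb{Z}_2$, hence in the same $\EZ$-class. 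The failure is confined to that single coset (on any coset $t+\mathbb{Z}$ with $t\notin\mathbb{Z}$ the digit strings have infinitely many $0$'s and $1$'s, so carries and borrows terminate and the biconditional holds), so the paper's theorem is unaffected --- one can simply restrict to sequences that are not eventually constant --- but as written its verification has a small gap which your argument repairs. Your repair, taking digits in $\{0,1\}$ inside $\mathbb{Z}_p$ with $p\ge 3$, is carried out correctly: carries are bounded by $1$, the digits of $m\ge 0$ (resp.\ $m<0$) are eventually $0$ (resp.\ $p-1$), and a persistent carry-$0$ state against digit $p-1$ would force an output digit outside $\{0,1\}$, so the carry stabilizes and the digits of $\phi(x)$ and $\phi(y)$ agree cofinitely, which is exactly the needed converse. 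One cosmetic addition: say a word, as the paper does, on why the composite $x \mapsto f_{\phi(x)}$ is Borel, e.g.\ because $f_{\phi(x)}(k) \ge \ell$ is determined by the first $\ell$ digits of $\phi(x)$, namely it holds iff $k \equiv \sum_{i<\ell} x(i)p^i \pmod{p^\ell}$.
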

	\begin{proof}
		Given $\bar{a} = (a_0,a_1,a_2,\ldots) \in 2^{\omega}$, associate with $\bar{a}$ the corresponding $t_a \in \mathbb{Z}_2$ with $\bar{a}$ as the coefficients of its 2-adic series, namely,
		\[ t_a = \sum_{i=0}^\infty a_i 2^i.\]
		Note that $\bar{a} \; E_0 \; \bar{b}$ if and only if $t_a - t_b \in \mathbb{Z}$. Thus (using Lemma \ref{lem:diff}) $a \mapsto f_{t_a}$ is the desired reduction. By Lemma \ref{lem:efin-equiv}, all $f_{t_a}$ lie in the same $\Efin$ class. 
		
		To see that this is Borel, note that $f_{t_{\bar{a}}}(k)$ may equivalently be defined directly from $\bar{a}$ as the greatest $\ell$ such that
		\[ k \equiv a_0 2^0 + a_1 2^1 + a_2 2^2 + \cdots + a_{\ell-1} 2^{\ell-1} \pmod{2^\ell}.\]
		If this is true for all $\ell$, then $f_a(k) = \infty$.
	\end{proof}
	
	\maintwo*
	
	\begin{proof}
		Take a $\lambda$-mixable sequence $(\{L_i\},K)$ with unique limit structure $L_\infty$, and take $L_\lambda$ to be the $(f,\Z)$-sum with respect to $f = f_t$ for some $t \in \mathbb{Z}_2$. Then, by previous results:
		\begin{enumerate}
			\item the structure $\equiv_\lambda$-equivalent to $L_\lambda$ are exactly the $(g,\Z)$-sums $L_g$ for some $g \Efin f$;
			\item each $L_g$ has Scott complexity $\Sigma_{\lambda+1}$;
			\item $L_g \cong L_h$ if and only if $g \EZ h$; and
			\item there are continuum many $\Efin$-equivalent but non-$\EZ$-equivalent functions.
		\end{enumerate}
		Taken all together, the theorem is proved.
	\end{proof}
	
	An interesting feature of this theorem is that the ``dual'' form is false. To show this, we will make essential use of the following definition and theorem of Gonzalez and Rossegger \cite{RosseggerGonzalez}.
	
	\begin{definition}
		For a structure $\A$ and a limit ordinal $\lambda$, a
		\emph{$\lambda$-sequence} in $\A$ is a set of tuples $\bar{y}_i\in \A$ for
		$i\in\omega$ such that $\bar{y}_i\equiv_{\alpha_i}\bar{y}_{i+1}$ for some fundamental
		sequence $(\alpha_i)_{i\in\omega}$ for $\lambda$.  We say that a $\lambda$-sequence is \emph{unstable} if $\bar{y}_i\not\equiv_{\alpha_{i+1}}\bar{y}_{i+1}$.
	\end{definition}
	
	\begin{theorem}[Gonzalez and Rossegger \cite{RosseggerGonzalez}]\label{unstable}
		Let $\A$ be a structure with $uSR(\A)=\lambda$ for $\lambda$ a limit ordinal. Then the Scott sentence complexity
		of $\A$ is $\Pinf{\lambda}$ if and only if there are no unstable
		$\lambda$-sequences in $\A$.
	\end{theorem}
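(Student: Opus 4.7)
We argue by contradiction. Suppose $\lambda$ is a limit ordinal and $L_\lambda$ is a structure such that every $M\equiv_\lambda L_\lambda$ has Scott complexity $\Pi_{\lambda+1}$. Applying the hypothesis to $L_\lambda$ itself yields $SSC(L_\lambda)=\Pi_{\lambda+1}$, so $uSR(L_\lambda)=\lambda$ by Table~\ref{table:invariants}. By Theorem~\ref{unstable}, $L_\lambda$ therefore contains an unstable $\lambda$-sequence $(\bar{y}_i)_{i\in\omega}$: for some fundamental sequence $\alpha_i\to\lambda$, $\bar{y}_i\equiv_{\alpha_i}\bar{y}_{i+1}$ but $\bar{y}_i\not\equiv_{\alpha_{i+1}}\bar{y}_{i+1}$. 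Chaining the first condition shows that $\bar{y}_j\equiv_{\alpha_i}\bar{y}_i$ for all $j\geq i$, so the $\alpha_i$-types of the $\bar{y}_i$ form a coherent increasing chain whose union is a consistent $\Pi_\lambda$-type $p$ each of whose $<\lambda$-approximations is realized in $L_\lambda$.

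The plan is to construct a countable $M\equiv_\lambda L_\lambda$ with $uSR(M)\geq\lambda+1$. Since Scott complexity $\Pi_{\lambda+1}$ forces $uSR=\lambda$ (Table~\ref{table:invariants}), such an $M$ contradicts our standing hypothesis. Equivalently, I will produce $M$ together with tuples $\bar{u},\bar{v}\in M$ satisfying $\bar{u}\equiv_\lambda\bar{v}$ but $\bar{u}\not\equiv_\infty\bar{v}$; by the standard characterization of $uSR$, the existence of such a pair witnesses $uSR(M)>\lambda$. (Note that the alternative strategy of seeking $M$ of simpler Scott complexity is ruled out: if $M\equiv_\lambda L_\lambda$ had a $\Pi_\lambda$ Scott sentence then $M$ would be the unique model of its $\Pi_\lambda$ theory, forcing $M\cong L_\lambda$ and hence contradicting $SSC(L_\lambda)=\Pi_{\lambda+1}$.)

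The construction of $M$ is an infinitary back-and-forth that maintains $\alpha_i$-equivalence with $L_\lambda$ at stage $i$. At designated stages, two tuples $\bar{u},\bar{v}$ are introduced, each arranged to realize $p$; this is feasible because every $<\lambda$-approximation of $p$ is realized in $L_\lambda$ by some $\bar{y}_j$, and these realizations can be transferred to $M$ via the back-and-forth. In parallel, asymmetric auxiliary structure is placed around $\bar{u}$ but not around $\bar{v}$ — drawn from the countable supply of pairwise $\equiv_\lambda$-inequivalent approximations $\bar{y}_i$ in $L_\lambda$, so that introducing it does not violate any $\Pi_\lambda$-sentence true in $L_\lambda$. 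Any automorphism of $M$ sending $\bar{u}$ to $\bar{v}$ would have to transport this auxiliary structure to an analogous configuration near $\bar{v}$, which by design does not exist.

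The main technical obstacle is verifying that the asymmetry between $\bar{u}$ and $\bar{v}$ genuinely persists at the $\lambda$-limit of the construction rather than being collapsed by some automorphism not visible at individual finite stages. This is precisely where the instability relation $\bar{y}_i\not\equiv_{\alpha_{i+1}}\bar{y}_{i+1}$ is used: at every cofinal level $\alpha_{i+1}$ the distinguishing data placed near $\bar{u}$ is refined one back-and-forth level beyond anything present near $\bar{v}$, so no matching of their neighborhoods at cofinally many levels below $\lambda$ can be maintained. Once this is confirmed the resulting $M$ satisfies $M\equiv_\lambda L_\lambda$ and contains the required witnesses $\bar{u},\bar{v}$, so $uSR(M)\geq\lambda+1$, completing the contradiction.
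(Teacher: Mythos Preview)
Your proposal does not address the stated theorem. Theorem~\ref{unstable} is the Gonzalez--Rossegger characterization of when a structure with $uSR(\A)=\lambda$ has Scott complexity $\Pi_\lambda$; the paper merely cites it without proof. What you have written is instead a proof attempt for the \emph{subsequent} result in the paper, that no structure $L_\lambda$ can have every $M\equiv_\lambda L_\lambda$ of Scott complexity $\Pi_{\lambda+1}$. You invoke Theorem~\ref{unstable} as a tool in your argument, but you never prove it.

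Reading your write-up as an attempt at that later theorem: the high-level plan (build $M\equiv_\lambda L_\lambda$ with $uSR(M)>\lambda$) matches the paper's, but your execution has a genuine gap. You promise an infinitary back-and-forth producing two tuples $\bar u,\bar v$ both realizing the limit type $p$, with ``asymmetric auxiliary structure placed around $\bar u$ but not around $\bar v$.'' In an arbitrary first-order structure there is no general mechanism for placing structure near a tuple without altering its $\Pi_\lambda$-type; and if the added data does \emph{not} alter that type, you give no reason it obstructs an automorphism carrying $\bar u$ to $\bar v$. Your last paragraph names this as the ``main technical obstacle'' and asserts the instability relations resolve it, but no construction is actually specified and nothing is verified.

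The paper's proof sidesteps all of this with a one-parameter argument. Apply Montalb\'an's limit lemma (Lemma~XII.6 of \cite{Mo21}) to the chain $(L_\lambda,\bar y_0)\equiv_{\alpha_0}(L_\lambda,\bar y_1)\equiv_{\alpha_1}\cdots$ to obtain $(N,b)$ with $(N,b)\equiv_{\alpha_i}(L_\lambda,\bar y_i)$ for all $i$. For each $i$ choose $k$ with $2\alpha_{i+1}+1\le\alpha_k$; the sentence $\exists c\,(b\equiv_{\alpha_i}c\wedge b\not\equiv_{\alpha_{i+1}}c)$ has complexity at most $\alpha_k$, holds in $(L_\lambda,\bar y_k)$, and therefore transfers to $(N,b)$. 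Thus $b$ has no $\Sigma_\lambda$-definable orbit in $N$, so $uSR(N)>\lambda$ and $N$ cannot have Scott complexity $\Pi_{\lambda+1}$. No second tuple and no handcrafted asymmetry are needed: the single limit parameter $b$ automatically inherits the instability witnesses.
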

	
	\nopistr*
	
	
	\begin{proof}
		Fix $M$ with Scott complexity $\Pi_{\lambda+1}$.
		By Rossegger and Gonzalez, this structure has a $\lambda$-unstable sequence $\{a_i\}_{i\in\omega}$.
		More explicitly, there is a fundamental sequence $\delta_i\to\lambda$ such that for all $i$, $a_i\equiv_{\delta_i} a_{i+1}$ yet $a_i\not\equiv_{\delta_{i+1}} a_{i+1}$.
		As structures we can observe the sequence $(M,a_0)\equiv_{\delta_0} (M,a_1)\equiv_{\delta_1} (M,a_2)\equiv_{\delta_2}\cdots$.
		Reindexing the sequence if needed, we can apply Lemma XII.6 from \cite{Mo21} to obtain a structure $(N,b)$ such that for all $i$, $(N,b)\equiv_{\delta_i} (M,a_i)$.
		Fix $i$ and let $k$ be such that $2\delta_{i+1}+1\leq \delta_k$.
		Note that
		\[ (M,a_k)\models \exists c ~ a_k\equiv_{\delta_i} c \land a_k\not\equiv_{\delta_{i+1}} c.\]
		This formula is expressible in $2\delta_{i+1}+1\leq \delta_k$ many quantifiers by Lemma VI.14 from \cite{Mo21}.
		This means that for all $i$ there is a $c_i$ such that $b\equiv_{\delta_i} c_i \land b\not\equiv_{\delta_{i+1}} c_i$.
		In other words, $b$ cannot have a $\Sigma_\lambda$ definable automorphism orbit inside of $N$.
		This means that $uSR(N)>\lambda$.
		In particular, $N\equiv_\lambda M$ yet the Scott complexity of $M$ is not $\Pi_{\lambda+1}$.
	\end{proof}
	
	This means that creating structures with exactly $\Pi_{\lambda+1}$ Scott complexity in the style of \cite{HT18} is impossible.
	
	\section{Proof of Proposition \ref{prop:no-scatter-sigma4}}
	
	We still have an outstanding debt, namely the proof of Proposition \ref{prop:no-scatter-sigma4}. We must show that there are no scattered linear orderings of Scott complexity $\Sigma_4$.
	
	\begin{proof}[Proof of Proposition \ref{prop:no-scatter-sigma4}]
		A linear ordering $\N$ with a $\Sigma_{4}$ Scott sentence must be of the form $L_1+1+L_2+1+\cdots+1+L_n$ where the $L_i$ have a $\Pi_{3}$ Scott sentence.
		Furthermore, each of these $L_i$ must be scattered, or else the whole linear ordering is not scattered.
		It follows from \cite{RosseggerGonzalez} that this means each of the $L_i$ must be among $\{\mathbf{k}\}_{k\in\omega},\zeta,\omega,\omega^*$ and $\omega+\omega^*$.
		
		Recall that for $a, b$ in a linear ordering, we say $a\sim_1 b$ if the interval $[a,b]$ is finite. By analyzing all possible cases, note that $\N$ must have the form $K_1+\cdots+K_m$ with each of the $K_i$ among  $\{\mathbf{k}\}_{k\in\omega},\zeta,\omega$ and $\omega^*$, and each $K_i$ is a $\sim_1$-equivalence class.
		In particular, $\N/\sim_1$ is finite. 
		
		We choose parameters in $L_i$ in each $\sim_1$ class and analyze the complexity of the parameters.
		The parameters we choose depends on the isomorphism type of the $\sim_1$ class.
		\begin{enumerate}
			\item If the $\sim_1$ class is finite, we take every element.
			\item If the $\sim_1$ class is $\omega$ we take the first element.
			\item If the $\sim_1$ class is $\omega^*$ we take the last element.
			\item If the $\sim_1$ class is $\zeta$ we take any element.
		\end{enumerate}
		
		Given $\N=K_1+\cdots+K_m$ say we choose $\bar{p}=p_1<\cdots<p_k$ according to the above rules.
		By convention let $p_0=-\infty$ and $p_{k+1}=\infty$.
		Checking case by case, it is easily confirmed that for all $i\leq k$, $(p_i,p_{i+1})$ is either empty or it is isomorphic to $\omega$, $\omega^*$ or $\omega+\omega^*$.
		In other words, it is immediate that $(\N,\bar{p})$ has a $\Pi_{3}$ Scott sentence.
		
		We now bound the complexity of the automorphism orbit of $\bar{p}$.
		Note that $\sim_1$ is a $\Sigma_{2}$ binary predicate and that the successor relation $S$ is a $\Pi_{1}$ binary predicate and define the following $\Pi_{2}$ formula:
		$$
		\psi(\bar{x}):= ~~~ 
		\bigwedge_{\{(k,j)\vert [p_k,p_j]\cong [p_k]_{\sim_1}\}}\big(\bigwedge_{k\leq i<j} S(x_{i},x_{i+1}) \big) \land \forall z<x_k\lnot S(z,x_k) \land  \forall z>x_j\lnot S(x_j,z)$$
		$$\land \bigwedge_{\{i\vert [p_i]_{\sim_1}\cong\omega\}}\forall z<x_i\lnot S(z,x_i)
		\land \bigwedge_{\{i\vert [p_i]_{\sim_1}\cong\omega^*\}}\forall z>x_i\lnot S(x_i,z)
		\land \bigwedge_{\{i\vert p_i\not\sim_1 p_{i+1}\}}  x_i\not\sim_1 x_{i+1} .$$
		Translating directly, the formula $\psi(\bar{x})$ states that we have picked $\bar{x}$ according to rules 1-4 enumerated above.
		
		We argue that $\psi(\bar{x})$ defines the orbit of $\bar{p}$, i.e., that rules 1-4 define the automorphism orbit of $\bar{p}$.
		Note that any automorphism of $\N$ descends to a necessarily trivial automorphism of the rigid $\N/\sim_1$.
		In other words, every automorphism factors into automorphisms of the the $\sim_1$ blocks.
		Therefore, the only non-trivial action of an automorphism shifts the $\zeta$ blocks as all other $\sim_1$ blocks are rigid.
		
		Thus $\psi(\bar{x})$ defines the automorphism orbit of $\bar{p}$.
		Because $\psi$ is $\Pi_{2}$, we get that there must be a $\text{d-}\Sigma_{3}$ Scott sentence for $\N$.
		This means that every scattered linear ordering with a $\Sigma_{4}$ Scott sentence has a $\text{d-}\Sigma_{3}$ Scott sentence, and there are no scattered linear orderings with Scott complexity $\Sigma_{4}$.
	\end{proof}


\begin{thebibliography}{}

\bibitem[AGHTT21]{AGHT}
Rachael Alvir, Noam Greenberg, Matthew Harrison-Trainor, and Dan Turetsky.
\newblock Scott complexity of countable structures.
\newblock {\em J. Symb. Log.}, 86(4):1706--1720, 2021.

\bibitem[AK]{ash2000}
Chris Ash and Julia Knight.
\newblock {\em Computable Structures and the Hyperarithmetical Hierarchy},
  volume 144.
\newblock {Newnes}.

\bibitem[Ash86]{As86}
C.~J. Ash.
\newblock Recursive labelling systems and stability of recursive structures in
  hyperarithmetical degrees.
\newblock {\em Trans. Amer. Math. Soc.}, 298(2):497--514, 1986.

\bibitem[Boh25]{Bohr}
Harald Bohr.
\newblock {Zur Theorie der Fastperiodischen Funktionen: II. Zusammenhang der
  fastperiodischen Funktionen mit Funktionen von unendlich vielen Variabeln;
  gleichmässige Approximation durch trigonometrische Summen}.
\newblock {\em Acta Mathematica}, 46(1-2):101 -- 214, 1925.

\bibitem[GR23]{RosseggerGonzalez}
David Gonzalez and Dino Rossegger.
\newblock Scott sentence complexities of linear orderings.
\newblock {\em arXiv preprint arXiv:2305.07126}, 2023.

\bibitem[HT18]{HT18}
Matthew Harrison-Trainor.
\newblock Scott ranks of models of a theory.
\newblock {\em Adv. Math.}, 330:109--147, 2018.

\bibitem[LE65]{Lo65}
E.~G.~K. Lopez-Escobar.
\newblock An interpolation theorem for denumerably long formulas.
\newblock {\em Fund. Math.}, 57:253--272, 1965.

\bibitem[Mil83]{Mi83}
Arnold~W. Miller.
\newblock On the {B}orel classification of the isomorphism class of a countable
  model.
\newblock {\em Notre Dame J. Formal Logic}, 24(1):22--34, 1983.

\bibitem[Mon15]{Mo15}
Antonio Montalb\'{a}n.
\newblock A robuster {S}cott rank.
\newblock {\em Proc. Amer. Math. Soc.}, 143(12):5427--5436, 2015.

\bibitem[Mon21]{Mo21}
Antonio Montalb\'{a}n.
\newblock {\em Computable structure theory---within the arithmetic}.
\newblock Perspectives in Logic. Cambridge University Press, Cambridge;
  Association for Symbolic Logic, Ithaca, NY, 2021.

\bibitem[Sco65]{Sc65}
Dana Scott.
\newblock Logic with denumerably long formulas and finite strings of
  quantifiers.
\newblock In {\em Theory of {M}odels ({P}roc. 1963 {I}nternat. {S}ympos.
  {B}erkeley)}, pages 329--341. North-Holland, Amsterdam, 1965.

\bibitem[Ste78]{St78}
John~R. Steel.
\newblock On {V}aught's conjecture.
\newblock In {\em Cabal {S}eminar 76--77 ({P}roc. {C}altech-{UCLA} {L}ogic
  {S}em., 1976--77)}, volume 689 of {\em Lecture Notes in Math.}, pages
  193--208. Springer, Berlin, 1978.

\bibitem[Tur20]{Turetsky20}
Dan Turetsky.
\newblock Coding in the automorphism group of a computably categorical
  structure.
\newblock {\em J. Math. Log.}, 20(3):2050016, 24, 2020.

\end{thebibliography}
\end{document}